\documentclass[leqno]{amsart}
\usepackage{amsfonts,amssymb,amsmath,amsgen,amsthm}
\usepackage{hyperref}
\usepackage{color}
\usepackage{bbm}
\newcommand{\msc}[2][2000]{%
  \let\@oldtitle\@title%
  \gdef\@title{\@oldtitle\footnotetext{#1 \emph{Mathematics subject
        classification.} #2}}%
}

\theoremstyle{plain}
\newtheorem{theorem}{Theorem}[section]

\newtheorem{lemma}[theorem]{Lemma}

\newtheorem{proposition}[theorem]{Proposition}

\theoremstyle{remark}
\newtheorem{remark}[theorem]{Remark}

\def\R{{\mathbb R}}
\def\N{{\mathbb N}}
\def\Z{{\mathbb Z}}
\def\T{{\mathbb T}}
\def\Sch{{\mathcal S}}
\def\O{\mathcal O}
\def\F{\mathcal F}

\def\({\left(}
\def\){\right)}
\def\<{\left\langle}
\def\>{\right\rangle}
\def\le{\leqslant}
\def\ge{\geqslant}

\def\1{\mathbbm{1}}

\def\Eq#1#2{\mathop{\sim}\limits_{#1\rightarrow#2}}
\def\Tend#1#2{\mathop{\longrightarrow}\limits_{#1\rightarrow#2}}

\def\d{{\partial}}
\def\eps{\varepsilon}

\def\si{{\sigma}}

\numberwithin{equation}{section}

\begin{document}

\title[Norm inflation for KdV]{Norm inflation in negative order
  Sobolev spaces for KdV and KP} 

\author[R. Carles]{R\'emi Carles}
\address{CNRS\\ IRMAR, UMR 6625\\ 35000 Rennes\\ France}
\email{Remi.Carles@math.cnrs.fr}
\begin{abstract}
  We prove norm inflation phenomena for KdV and KP equations in
  negative order Sobolev spaces, in the periodic case, as well as on
  the whole space, on an arbitrarily large scale of
  negative order Sobolev spaces as target spaces. The proof relies on WKB
  analysis for 
  a semiclassical version of the equation, in a weakly nonlinear
  r\'egime, and the creation of the zero Fourier mode by resonant
  interaction. Unlike in previous similar results, this average mode has
  a smaller order of magnitude than the initial data, which requires a
  more detailed WKB analysis. 
\end{abstract}
\maketitle

\section{Introduction}
\label{sec:intro}

\subsection{Setting}

We consider the Korteweg-de Vries (KdV) equation 
\begin{equation}
  \label{eq:kdv}
   \d_t v +\d_x^3 v =6v\d_x v,\quad v_{\mid t=0}=v_0,
 \end{equation}
 either on the line, $x\in \R$, or on the circle, $x\in \T$, and
the Kadomtsev-Petviashvili (KP) equation, 
\begin{equation}
  \label{eq:kp}
  \d_t v +v\d_x v +\d_x^3 v +\lambda \d_x^{-1}\d_y^2 v =0,\quad
  v_{\mid t=0}=v_0, 
\end{equation}
with $\lambda=\pm 1$, either on $\R^2$ or $\T^2$. The case
$\lambda=+1$ corresponds to the ``defocusing'' KP-II equation, while
the case $\lambda=-1$ corresponds to the ``focusing'' KP-I equation.
We refer to \cite{KleinSaut21} for an extensive
bibliography regarding 
the origin and the mathematical analysis of these models. As the sign of
$\lambda$ plays no role in our analysis, we will evoke the KP equation
to address both KP-I and KP-II. 
For both KdV and KP, we prove norm inflation phenomena in negative order
Sobolev spaces. In the sequel, $M^d$ denotes either $\R^d$ or $\T^d$,
and when $d=1$, we simply write $M$. 
\smallbreak

 On $\R$, the KdV equation is invariant under
 the scaling
 \begin{equation*}
   v(t,x)\to
  \Lambda^2v\(\Lambda^3 t,\Lambda x\),\quad \Lambda>0,
 \end{equation*}
 which leave the Sobolev norm $H^{-3/2}(\R)$ invariant.
 In \cite{KillipVisan2019}, Killip and Vi\c san proved that the KdV flow map could be
 uniquely, continuously extended to a jointly continuous map $\R\times
 H^{-1}(M)\to H^{-1}(M)$. This result is sharp in the sense that it is
 not possible to consider a similar statement for a weaker Sobolev
 regularity, as proven by Luc Molinet. In \cite{Molinet2011} for $x\in
 \R$, \cite{Molinet2012} 
for $x\in\T$, he proved that for any $s<-1$ and any $T>0$, the flow
map cannot be extended from $H^{s}(M)$ to $\mathcal D'(]0,T[\times M)$.
On the one hand, our statement below is weaker, but on the other hand,
it provides a more quantitative statement, whose proof, unlike in
\cite{Molinet2011,Molinet2012}, does not rely on the Miura transform;
see Remark~\ref{rem:mKdV} below for a more precise discussion
in this direction. Note however that the statement in
\cite{KillipVisan2019} must be read carefully, as it was proven in
\cite{CCT} that on $\R$, for any $-1\le s<-3/4$, the solution operator
fails to be uniformly 
continuous with respect to the $H^s$ norm, and a similar statement
holds on $\T$, for $-2<s<-1/2$. Here again, the proof relies on the
study of the modified KdV equation and the Miura transform. 

\subsection{Main results}
\label{sec:results}

\begin{theorem}[Norm inflation for KdV]\label{theo:main-kdv}
  Let $s_1,s_2<-1$.
  \begin{itemize}
  \item There exist a sequence of initial data $(v_0^\eps)_{0<\eps\le
      1}$ in $C_0^\infty(M)$ and $t^\eps\to 0$ such that
    \begin{equation*}
      \|v_0^\eps\|_{H^{s_1}(M)} \to 0 \quad\text{as }\eps\to 0,
    \end{equation*}
    while the solution to \eqref{eq:kdv} satisfies
    \begin{equation*}
       \|v^\eps(t^\eps)\|_{H^{s_2}(M)}\to 0,\quad
       \frac{\|v^\eps(t^\eps)\|_{H^{s_2}(M)}}{
         \|v_0^\eps\|_{H^{s_1}(M)}} \to  \infty\quad\text{as }\eps\to 0.
     \end{equation*}
   \item For any $K>-s_2$ and any $\delta>0$, there exist a sequence
     of initial data $(v_0^\eps)_{0<\eps\le 
       1}$ in $C_0^\infty(M)$ and $t^\eps\to 0$ such that
     \[\|v^\eps_0\|_{H^{s_1}(M)}\Tend \eps 0 0,\] 
     while the solution to \eqref{eq:kdv} satisfies
    \begin{equation*}
      \|v^\eps(t^\eps)\|_{H^\si(M)}>\frac{1}{\delta},\quad \forall \si\in
      [-K,s_2]. 
    \end{equation*}
  \item In particular, for any $K>-s_2$ and any $\delta>0$, there
    exist $v_0\in 
    C_0^\infty(M)$ with $\|v_0\|_{H^{s_1}(M)}<\delta$, and
    $0<t<\delta$ such that the solution to \eqref{eq:kdv} satisfies
    \begin{equation*}
      \|v(t)\|_{H^\si(M)}>\frac{1}{\delta},\quad \forall \si\in
      [-K,s_2]. 
    \end{equation*}
  \end{itemize}
\end{theorem}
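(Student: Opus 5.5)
We follow the strategy announced in the abstract: semiclassical rescaling, WKB analysis in a weakly nonlinear régime, and creation of the zero Fourier mode by resonant interaction of two high-frequency waves.

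\textbf{Ansatz and rescaling.} We take initial data oscillating at high frequency $1/\eps$,
\begin{equation*}
  v_0^\eps(x)=\eps^{-\alpha}\,\chi(x)\cos\(\frac{x}{\eps}\),
\end{equation*}
with $\chi\in C_0^\infty$ on $\R$, or $\chi\equiv 1$ and $1/\eps\in\N$ on $\T$. This gives $\|v_0^\eps\|_{H^{s_1}}\approx \eps^{-\alpha-s_1}$, which tends to $0$ as soon as $\alpha<-s_1$. We then pass to semiclassical form. Setting $v(t,x)=\eps^{-\alpha}u(t/\eps^{\beta},x)$ and choosing $\beta$ suitably, the equation becomes
\begin{equation*}
  \d_t u+\eps^{2}\d_x^3u=\eps^{\gamma}\,6u\,\d_x u,
\end{equation*}
with a nonlinearity that is weak at the scale of the oscillations.

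\textbf{WKB expansion and the resonance.} We write $u=\eps^0 u_0+\eps^{\gamma} u_1+\dots$ and expand.
\begin{itemize}
\item The leading term is linear Airy propagation of the two phases $\pm(x/\eps - t/\eps^{3}\cdot\text{const})$.
\item The quadratic term $u\d_xu=\frac12\d_x(u^2)$ produces harmonics $0$ and $\pm 2$.
\item The zero mode is resonant: the dispersion relation $\omega(k)=-k^3$ is odd, so $\omega(k)+\omega(-k)=0$. Its source is $\frac12\d_x(|a|^2)$, a non-oscillating term, so $u_1$ contains a mean component $\eps^{\gamma} t\,\d_x(|a|^2)/4$ growing linearly in time.
\item The second harmonic is nonresonant, since $\omega(2k)\neq 2\omega(k)$, and stays bounded.
\end{itemize}
In $H^\si$ with $\si<0$, the low-frequency term has norm of order $\eps^{-\alpha}\eps^{\gamma}t$ in the original variables, independent of $\si$. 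The high-frequency part has norm only of order $\eps^{-\alpha-\si}$. For $\si\le s_2<-1$ and a suitable time $t^\eps\to0$, the mean term therefore dominates. Choosing $\alpha$ and the time scale gives the ratio $\to\infty$ (first item) or the norm $>1/\delta$ uniformly on $[-K,s_2]$ (second item). The range $s_1,s_2<-1$ should appear exactly as the condition for consistency of the exponents: $\alpha<-s_1$ while the zero mode reaches size $\eps^{-2\alpha}t\cdot\eps^{-1}$-type amplitude. The upper bound $K$ enters only through the $\si$-uniformity of the remainder estimate. The third item follows from the second by picking $\eps$ small.

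\textbf{Main obstacle: justifying the approximation.} The difficulty, as the abstract stresses, is that the zero mode is smaller than the data. The error $u-u_{\rm app}$ must therefore be controlled to an accuracy finer than $\eps^{\gamma}$ in a negative Sobolev norm, not merely in $L^2$. We would proceed in three steps.
\begin{itemize}
\item Push the WKB expansion to higher order, including correctors for the second harmonic and the next terms generated by the mean.
\item Make the remainder $r$ satisfy a source term $O(\eps^{N})$ in $H^m$.
\item Perform energy estimates for $r$ in $L^2$ and $H^1$ on the linearized KdV around $u_{\rm app}$. Since $\|\d_x u_{\rm app}\|_{L^\infty}\lesssim\eps^{\gamma-1}$, Gronwall yields growth $e^{C\eps^{\gamma-1}t}$, which stays bounded on the chosen short time scale.
\end{itemize}
The delicate point is checking that this time window is still long enough for the mean mode to beat $\eps^{N}$ and the high-frequency part. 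If needed, we would rescale $\chi$ on $\R$ (width $\eps^{-\theta}$) to gain extra room.
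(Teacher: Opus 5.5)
There is a genuine gap, and it starts with your periodic construction, which fails outright. With $\chi\equiv 1$ and $N=1/\eps\in\N$, the datum $\eps^{-\alpha}\cos(Nx)$ is $2\pi/N$-periodic. By uniqueness the solution stays $2\pi/N$-periodic, so $\widehat v(t,k)=0$ unless $N\mid k$, and $\widehat v(t,0)$ is the conserved mean, which is $0$. Conservation of the $L^2$ norm then gives $\|v(t)\|_{H^\si}\le \langle N\rangle^{\si}\|v_0\|_{L^2}=\langle N\rangle^{\si-s_1}\|v_0\|_{H^{s_1}}$, so no inflation is possible for $\si\le s_1$. At the WKB level, the source $\d_x|a|^2$ vanishes identically.

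This exposes a misreading of the mechanism. What is created is not the zero Fourier coefficient but a non-oscillating envelope $\eps a_0$, with
\[
a_0(t,x)=6\int_0^t \d_x|a_1(s,x)|^2\,ds=2\bigl(|\alpha_1(x+3t)|^2-|\alpha_1(x)|^2\bigr).
\]
This term is nonzero only because $|a_1|^2$ is localized and transported at the group velocity. That is why the paper takes $\operatorname{supp}\alpha\subsetneq\T$ and a time $\tau$ with $\operatorname{supp}\alpha\cap\operatorname{supp}\alpha(\cdot+3\tau)=\emptyset$. The same formula shows the mean term does not grow linearly in time (only for $t\ll 1$); it saturates, so no choice of time scale produces largeness. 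In the paper (your $\alpha=1$, exactly the weakly nonlinear scaling), $a_0(\tau)$ is $O(1)$ in the $v$ variables. The bound $>1/\delta$ comes from taking $\alpha_1=N\alpha$ with a large fixed $N$, while $\|v_0^\eps\|_{H^{s_1}}=O(\eps^{|s_1|-1})$ still tends to $0$ for that fixed $N$. $K$ enters through the positivity of $\inf_{-K\le\si\le s_2}\|a_0(\tau)\|_{H^\si}$, not through the remainder, which is uniform in $\si\le s_2$.

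Beyond this, the quantitative core is not carried out. You never fix $\alpha$, and you leave open whether the Gronwall window is long enough. With $\alpha=1$ this issue disappears: the Gronwall rate $\|\eps\d_x u^\eps_{\rm app}\|_{L^\infty}$ is $O(1)$ on $O(1)$ times. What is really needed is accuracy: $u^\eps-u^\eps_{\rm app}=o(\eps)$ in $L^2$, so that after the factor $\eps^{-1}$ it is negligible against $a_0$. The paper obtains $O(\eps^2)$ using well-prepared data carrying the correction $-\eps\alpha_1^2e^{2ix/\eps}$, and an ansatz containing $b_1,c_1,a_2,b_2,a_3$ with residual $O(\eps^3)$.

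For the first item you give no mechanism that makes $\|v^\eps(t^\eps)\|_{H^{s_2}}\to 0$ while the ratio diverges; the paper uses the extra scaling parameter $\beta=2-\eta$ (wider envelopes). Your free amplitude exponent could actually do this job. Taking $\alpha<1$ amounts to running the $\beta=2$ argument with $\alpha_1$ replaced by $\eps^{1-\alpha}\alpha_1$ (all errors are then $O(\eps^2\cdot\eps^{1-\alpha})$), and the exponent constraints come out as $\max(0,2-|s_1|,2-|s_2|)<\alpha<1$. None of this bookkeeping appears in your proposal.
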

The case $\si=s_2=s_1$ corresponds to the phenomenon of norm inflation in
$H^{s_1}(M)$, according to the terminology introduced in
\cite{CCT2}. The range for $\si$ shows that the underlying phenomenon
is stronger.

\smallbreak

On $\R^2$, the KP equation is invariant under the scaling
\begin{equation*}
  v(t,x,y)\to
  \Lambda^2v\(\Lambda^3 t,\Lambda x,\Lambda^2y\),\quad \Lambda>0,
\end{equation*}
which leaves the $H^{s_1,s_2}(\R^2)$-norm invariant for
$s_1+2s_2=\frac{1}{2}$, where
\begin{equation*}
  H^{s_1,s_2}(M^2)=\{ \phi\in \Sch'(M^2)\ ; \
  \|\phi\|_{H^{s_1,s_2}}:=
  \|(1-\d_x^2)^{s_1/2}(1-\d_y^2)^{s_2/2}\phi\|_{L^2(M^2)}<\infty\}. 
\end{equation*}
We prove the analogue of the last two points from
Theorem~\ref{theo:main-kdv}, and leave out the first one, because it
is somehow weaker. 

\begin{theorem}[Norm inflation for KP]\label{theo:main-kp}
  Let $s_1,s_1',s_2,s_2'\le 0$ such that
  \begin{equation*}
    s_1+2s_2<-1,\quad s'_1+2s'_2<-1,
  \end{equation*}
  and let $K>-s_1'-2s_2'$.
  \begin{itemize}
  \item For any $\delta>0$, there exist a sequence
     of initial data $(v_0^\eps)_{0<\eps\le 
       1}$ in $C_0^\infty(M^2)$ and $t^\eps\to 0$ such that
     \[\|v^\eps_0\|_{H^{s_1,s_2}(M^2)}\Tend \eps 0 0,\] 
     while the solution to \eqref{eq:kp} satisfies
    \begin{equation*}
      \|v^\eps(t^\eps)\|_{H^{\si_1,\si_2}(M^2)}>\frac{1}{\delta},\quad
      \forall \si_1,\si_2\le 0\text{ such that } 
      -K\le \si_1+2\si_2\le s_1'+2s'_2. 
    \end{equation*}
  \item In particular, for any $\delta>0$, there exist $v_0\in
    C_0^\infty(M^2)$ with $\|v_0\|_{H^{s_1,s_2}(M^2)}<\delta$, and
    $0<t<\delta$ such that the solution to \eqref{eq:kp} satisfies
    \begin{equation*}
      \|v(t)\|_{H^{\si_1,\si_2}
        (M^2)}>\frac{1}{\delta},\quad
      \forall \si_1,\si_2\le 0\text{ such that } 
      -K\le \si_1+2\si_2\le s_1'+2s'_2. 
    \end{equation*}
  \end{itemize}
\end{theorem}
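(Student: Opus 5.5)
We follow the strategy sketched in the abstract and adapt it to KP: a semiclassical rescaling, WKB analysis in a weakly nonlinear r\'egime, and the creation of the zero Fourier mode by resonant interaction of two modes with opposite wave vectors. First we choose initial data oscillating in both directions,
\begin{equation*}
  v_0^\eps(x,y)=\eps^{-A}\,a(x,y)\cos\Big(\frac{\kappa_1 x}{\eps}+\frac{\kappa_2 y}{\eps^2}\Big),
\end{equation*}
with $a\in C_0^\infty$ on $\R^2$ (on $\T^2$ we take $a$ constant or trigonometric and $\kappa_j$ integers rescaled accordingly). The anisotropic scaling $x\sim\eps$, $y\sim\eps^2$ is dictated by the scaling of \eqref{eq:kp}. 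With this scaling, $\|v_0^\eps\|_{H^{s_1,s_2}}\approx \eps^{-A-s_1-2s_2}\times(\text{volume factor})$. Since $s_1+2s_2<-1$, we can pick $A$ so that this tends to $0$; the volume factor must be tracked on $\R^2$.

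Next, in the variables $t=\eps^3\tau$, we rewrite \eqref{eq:kp} in semiclassical form,
\begin{equation*}
  \d_\tau u+\eps^3 \d_x^3 u+\lambda \eps^3\d_x^{-1}\d_y^2u +\eps^{2+\text{(power)}} u\d_x u=0,
\end{equation*}
whose oscillatory phases $\pm\phi$, with $\phi=\kappa_1 x+\kappa_2 y-\omega\tau$, satisfy the dispersion relation. The quadratic nonlinearity then produces:
\begin{itemize}
  \item harmonics $e^{\pm 2i\phi/\eps}$, which are nonresonant and stay small;
  \item the zero mode, generated resonantly from $e^{i\phi}e^{-i\phi}$.
\end{itemize}
The term $u\d_x u=\frac12\d_x(u^2)$ makes the zero mode $\frac12\d_x|a|^2$ appear at an order smaller than the data. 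This is the point stressed in the abstract, and it forces an expansion $u=u_0+\eps^{\gamma}u_1+\dots$ with explicit correctors:
\begin{itemize}
  \item for the nonresonant harmonic $e^{2i\phi/\eps}$, we solve algebraically with a nonzero symbol;
  \item for the mean mode, we integrate in time, $\d_\tau U_0=-\frac12\d_x|a|^2+\dots$, giving $U_0\approx -\tau\,\d_x|a|^2/2$ times the amplitude squared.
\end{itemize}
The choice of $A$ places us in the weakly nonlinear r\'egime, where the principal amplitude is transported linearly with no nonlinear effect at leading order.

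The remainder is handled by an energy estimate in $L^2$ (or $H^m$ for $m$ large) on the difference $w=u-u_{\rm app}$. We use that the linear KP operator is skew-adjoint and that $\d_x^{-1}$ is harmless for the zero mode only because it appears through $\d_x(\cdot)$. The nonlinear term is controlled by Gronwall together with $\|\d_x u_{\rm app}\|_{L^\infty}$, which is of order $\eps^{-1}$ times a small nonlinearity coefficient. This allows times $\tau$ up to a small power of $\log$ or of $\eps$, and we need the error to be $o$ of the mean-mode size in $H^{\si_1,\si_2}$. The main obstacle is this step: the created mode is smaller than the data, so the WKB approximation must be pushed one or two orders further than usual, and the remainder must be shown smaller than $\eps^{\gamma}\tau$. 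I would try first to choose $t^\eps$ so that the zero mode reaches size $\delta^{-1}$ exactly at the time when the Gronwall factor $e^{C\tau\eps^{\beta}}$ is still bounded.

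Finally, we bound norms. Low frequencies, those of order $1$, carry the zero mode $\tau\d_x|a|^2$, so for every $\si_1,\si_2\le0$ its $H^{\si_1,\si_2}$ norm is comparable to its $L^2$ norm, independently of $\si$ down to $-K$. The oscillating parts at frequency $(\eps^{-1},\eps^{-2})$ are bounded by $\eps^{-A}\eps^{-(\si_1+2\si_2)}$ times the volume factor, which is of lower order. Projecting on low frequencies, we therefore obtain the lower bound uniformly for $-K\le \si_1+2\si_2\le s_1'+2s_2'$. The condition $K>-s_1'-2s_2'$ is only needed to make the interval nonempty and to fix the number of correctors required. Choosing $\eps$ small yields the second point of the statement.
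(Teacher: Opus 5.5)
Your strategy matches the paper's at the level of mechanism: semiclassical rescaling, weakly nonlinear WKB, resonant creation of a mean mode of size $\eps$, an energy estimate on $w=u-u_{\rm app}$, then a comparison of norms. However, the step that is specific to KP is missing, and without it the remainder estimate cannot be set up. The operator $\d_x^{-1}$ is harmless on the mean mode, as you say, but it is not harmless on the \emph{oscillating} part. For $a\in C_0^\infty$, $\d_x^{-1}(a\,e^{i\kappa_1x/\eps})$ has Fourier transform $\frac{1}{i\xi}\hat a(\xi-\kappa_1/\eps,\eta)$. Since $\hat a(-\kappa_1/\eps,\eta)$ is tiny but generically nonzero, this is not square integrable near $\xi=0$. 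On $\T^2$ the zero $x$-frequency of $a\,e^{i\kappa_1x/\eps}$ is generically nonzero, so $\d_x^{-1}$ is not even defined. With your data $\eps^{-A}a\cos(\cdots)$ and an ansatz built on plain amplitudes, the residual therefore contains $\lambda\eps^3\d_x^{-1}\d_y^2u_{\rm app}\notin L^2$, and the energy method for $w$ breaks down. The paper's fix is the central device of Section~\ref{sec:kp} and Appendix~\ref{sec:BKW-KP}:
\begin{itemize}
\item write the equation in the differentiated form \eqref{eq:u-compl};
\item seek $u_{\rm app}=\eps\d_x(\sum_j\eps^{j-1}(a_j+\eps b_j+\eps^2c_j)e^{ij\phi^\eps}+{\rm c.c.})+\eps a_0$, where $a_0$ is itself an $x$-derivative because $\d_ta_0=-k_1^2\d_x|a_1|^2$, so the residual is an exact derivative $\eps\d_x\Sigma^\eps$;
\item take well-prepared data $u^\eps_{\mid t=0}=u^\eps_{{\rm app}\mid t=0}$.
\end{itemize}
Only then can $(\eps\d_x)^{-1}$ be applied to obtain \eqref{eq:w-kp} with a source that is $\O(\eps^3)$ in $H^3_\eps$. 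In particular, your initial data must be replaced by $x$-derivatives of $C_0^\infty$ functions, not cosine-modulated profiles.

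Your remainder and time bookkeeping also need correcting.
\begin{itemize}
\item The equation is quasilinear. The $L^2$ estimate for $w$ involves $\|\d_xu\|_{L^\infty}$, hence $\|\eps\d_xw\|_{L^\infty}$, and not only $\|\d_xu_{\rm app}\|_{L^\infty}$. The paper closes this with a bootstrap in the anisotropic norm \eqref{eq:Hkeps} with $k=3$ and the embedding $\|\eps\d_xf\|_{L^\infty}\lesssim\eps^{-3/2}\|f\|_{H^3_\eps}$. This, together with the need for an $o(\eps)$ remainder, is what dictates harmonics up to $3$ and terms up to $\eps^2$, so that $w=\O(\eps^2)$ and $\eps^{-3/2}\eps^2\to0$.
\item The mean mode does not grow like $\tau\d_x|a|^2$. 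The amplitude $a_1$ is transported at the group velocity $c=3k_1^2+\lambda k_2^2/k_1^2$, so for $c\neq0$, $a_0(\tau)=-\frac{k_1^2}{c}(|\alpha_1(x+c\tau,y)|^2-|\alpha_1(x,y)|^2)$ stays bounded in time. The size $\delta^{-1}$ therefore cannot be reached by tuning $t^\eps$. The paper instead takes $\alpha_1=N\alpha$ with $N=N(\delta)$ large ($a_0$ is quadratic in $\alpha_1$), a fixed $\tau$, and $t^\eps=\eps^2\tau$. The estimates hold on a fixed $[0,T]$ because, in the paper's variables, the Gronwall constant is $\|\eps\d_xu\|_{L^\infty}=\O(1)$.
\item In your variables $t=\eps^3\tau$ this corresponds to $\tau\sim\eps^{-1}$. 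At times that are only logarithmic or a small power of $\eps^{-1}$, the mean mode of $v$ is still $o(1)$.
\end{itemize}
Finally, three smaller points. The choice $A=1$ is forced, not free: it is what makes the mean mode of $v$ of size one and the data small exactly when $s_1+2s_2<-1$. A constant profile on $\T^2$ creates no mean mode. And $K$ plays no role in the number of correctors, since $\eps^{-1}(u^\eps-u^\eps_{\rm app})=\O(\eps)$ in $L^2$ already controls every $H^{\si_1,\si_2}$ norm with $\si_1,\si_2\le0$.
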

In \cite{MoSaTz02}, it is shown
that in the case $\lambda=-1$  (KP I) for any $s_1,s_2\in\R$, the flow
map fails to be $C^2$ from 
$H^{s_1,s_2}(\R^2)$ to $H^{s_1,s_2}(\R^2)$, and in \cite{KoTz08}, it is
proven that the flow map cannot be uniformly continuous in the energy
space. Like in the KdV case, our result is stronger than merely a norm
inflation in a fixed Sobolev space, but since analogues of the results
by Molinet do not seem to be available in the KP case, all the results
from Theorem~\ref{theo:main-kp} appear to be new. 
\subsection{Scheme of the proof}
\label{sec:scheme}

We describe the scheme of the proof in the KdV case, the idea being
similar for KP. 
The proof relies on high frequency analysis of the semiclassically
scaled KdV equation,
\begin{equation}
  \label{eq:u-kdv}
  \eps \d_t u^\eps +\eps^3\d_x^3 u^\eps =6 \eps^2 u^\eps\d_x u^\eps,
\end{equation}
in the limit $\eps\to 0$, with initial data of the form (at least as a
first approximation)
\begin{equation*}
  u^\eps(0,x) = \alpha_1(x) e^{ix/\eps}+ \alpha_{-1}(x) e^{-ix/\eps}.
\end{equation*}
In the case $M=\T=\R/2\pi\Z$, to guarantee the periodicity of
$u^\eps(0,\cdot)$, we assume that the parameter $\eps$ is of the form
$\eps=1/N$ for some $N\in \N$. 
The presence of rapid oscillations implies that in the limit $\eps\to
0$, we have the order of magnitude
\begin{equation*}
  \|u^\eps(0)\|_{H^s(M)}\approx \eps^{-s}, \quad \forall s\in \R.
\end{equation*}
In particular, for negative $s$, we consider small data in $H^s$. 
In WKB analysis (or geometric optics approximation, see
e.g. \cite{RaBook2}), one seeks an ansatz of the form 
\begin{equation}\label{eq:OG}
  u^\eps(t,x) \approx \sum_j \(a_j(t,x)+\eps
  b_j(t,x)+\eps^2c_j(t,x)+\dots\) e^{i\phi_j(t,x)/\eps}, 
\end{equation}
where we keep the presentation on a formal level in this
subsection. Plugging this formula into \eqref{eq:u-kdv}, we first
solve the $\O(\eps^0)$ equations,
\begin{equation*}
  \d_t \phi_j-(\d_x \phi_j)^3=0.
\end{equation*}
For plane wave oscillations at initial time,
\begin{equation*}
  \phi_j(0,x) =jx,
\end{equation*}
the solution to the above eikonal equation is given by the dispersion
relation
\begin{equation*}
  \omega(j) =j^3,\quad\text{hence}\quad \phi_j(t,x)=jx+j^3 t.
\end{equation*}
Due to the factor $\eps^2$ in front of the nonlinearity in
\eqref{eq:u-kdv}, the nonlinearity (possibly) plays some role only at
next order, $\O(\eps^1)$: in terms of geometric optics, this is a
\emph{weakly nonlinear} r\'egime. Still on a formal level, the
nonlinear interaction $\eps^2 u^\eps\d_x u^\eps$ involves products of
exponentials $e^{i\phi_j/\eps}$ from \eqref{eq:OG}. The phase
$\phi_j+\phi_k$ solves the eikonal equation if and only if
$\phi_j+\phi_k=\phi_{j+k}$ (since the space factor is $(j+k)x$), hence
\begin{equation*}
  (j+k)^3=j^3+k^3\Longleftrightarrow 3jk(j+k)=0.
\end{equation*}
If not present initially, we say that the phase $\phi_{j+k}$ is
created by resonant interaction. 
In the case that we consider, $u^\eps_{\mid t=0}$ contains $\phi_1$
and $\phi_{-1}=-\phi_1$: the zero mode is created by resonant
interaction. Unlike the initial data, non-oscillatory terms have $H^s$
norms whose behavior as $\eps\to 0$ are essentially independent of
$s\in \R$: the zero mode may become dominant in negative order Sobolev
spaces, and this is the key mechanism leading to
Theorem~\ref{theo:main-kdv}. 

\begin{remark}[Previous results based on this idea]
  This idea that the creation of the zero mode by resonant interaction
  may cause norm inflation on a scale of negative order Sobolev spaces
  (and not only in the same Sobolev space) goes back to \cite{CDS12},
  in the case of the  multidimensional nonlinear Schr\"odinger equation
  on $\R^d$ and the (2D) Davey-Stewartson system. Stronger results (in
  terms of Sobolev indices and space dimension)
  were obtained in the periodic case $x\in \T^d$ in
  \cite{CaKa17}. Similar results can be found in \cite{BhCa20}
  (Sobolev spaces are replaced by Fourier-Lebesgue or modulation
  spaces), \cite{BihmaniHaque2022} (wave equation),
  \cite{BihmaniHaque2023} (fractional nonlinear Schr\"odinger
  equation).  In all these cases, the zero mode has the same
  order of magnitude at the initial data. In the present case however,
  we will see that the zero 
  mode comes with a factor $\eps$, which may be understood as a
  reminder that KdV and KP equations are quasilinear, while nonlinear
  Schr\"odinger or wave equations considered in the above references are
  semilinear. 
\end{remark}

\begin{remark}[mKdV]\label{rem:mKdV}
  In the case of the semiclassically scaled  modified KdV equation,
 \begin{equation*}
  \eps \d_t u^\eps +\eps^3\d_x^3 u^\eps =\pm \eps^2 \(u^\eps\)^2\d_x u^\eps,
\end{equation*}
our approach seems inconclusive. Indeed, it is impossible to create
the zero mode by resonant interaction. We must now consider the
interaction of three phases, $\phi_{k_j}$, $j=1,2,3$: the phase
$\phi_{k_1}+\phi_{k_2}+\phi_{k_3}=0$  if and
only if
\begin{equation*}
  k_1+k_2+k_3=0,\quad k_1^3+k_2^3+k_3^3=0.
\end{equation*}
Plugging the identity $k_3=-(k_1+k_2)$ into the second formula yields
\begin{equation*}
  -3k_1k_2k_3=3k_1k_2(k_1+k_2)=0.
\end{equation*}
Therefore, the zero mode cannot be \emph{created} by this mechanism:
at least one of the $\phi_{k_j}$'s must be zero. This shows that the
present approach is different from the one in
\cite{Molinet2011,Molinet2012} since there, the author first considers
the modified KdV equation, in order to infer results for the KdV
thanks to the Miura transform.
\end{remark}

\subsection{Organization of the paper}
\label{sec:contents}

In Section~\ref{sec:kdv}, we give details of the proof of
Theorem~\ref{theo:main-kdv}, up to the derivation of the WKB
approximation. In Section~\ref{sec:kp}, we proceed similarly in the KP
case. The construction of the WKB ansatz is given in appendices.
Appendix~\ref{sec:NL} contains a general computation regarding the
action of the Burgers nonlinearity on WKB type
functions. In Appendix~\ref{sec:BKW-KdV}, we present the construction of
WKB ansatz for the semiclassical KdV equation,
Appendix~\ref{sec:BKW-KP} provides the analogue result in the KP case.

\section{KdV: proof of Theorem~\ref{theo:main-kdv}}
\label{sec:kdv}

\subsection{Scaling}
\label{sec:scaling}

Let $v$ solve \eqref{eq:kdv} and consider
\begin{equation*}
  u^\eps(t,x) = \eps^\alpha v\(\eps^\beta t ,\eps^\gamma x\),
\end{equation*}
for some parameters $\alpha,\beta,\gamma$. This function solves the
semiclassical KdV equation \eqref{eq:u-kdv} if and only if
\begin{equation*}
  1+\beta=3+3\gamma=2+\alpha+\gamma.
\end{equation*}
Keeping $\beta$ as the only parameter yields
\begin{equation*}
  \alpha=\frac{2\beta-1}{3},\quad \gamma=\frac{\beta-2}{3}. 
\end{equation*}
As we start from 
\begin{equation*}
  u^\eps(0,x) = \alpha_1(x) e^{ix/\eps}+ \alpha_{-1}(x) e^{-ix/\eps},
\end{equation*}
this gives in terms of $v$:
\begin{equation}\label{eq:CI-v}
  v_0^\eps(x)=
  \eps^{\frac{1-2\beta}{3}}\alpha_1\(x\eps^{\frac{2-\beta}{3}}\)
  e^{ix/\eps^{\frac{1+\beta}{3}}} +
  \eps^{\frac{1-2\beta}{3}}\alpha_{-1}\(x\eps^{\frac{2-\beta}{3}}\) 
  e^{-ix/\eps^{\frac{1+\beta}{3}}}.
\end{equation}
For $\beta=2$, it is straightforward to estimate $v_0^\eps$ in
$H^s$. In the first point of Theorem~\ref{theo:main-kdv}, we also consider
$\beta<2$, and $v_0^\eps$ is measured  in
$H^s$ thanks to the next subsection.
\subsection{Estimating oscillatory terms}
The following result is a direct adaptation of \cite[Lemma~5.1]{CDS12}:
\begin{lemma}\label{lem:5.1}
  Let $0<\beta\le 2$. For $f\in \Sch(\R)$ and $\kappa\in \R$, 
  we denote
  \begin{equation*}
    I^\eps(f,\kappa)(x)= f\(x
  \eps^{(2-\beta)/3}\) e^{i\kappa x/\eps^{(1+\beta)/3}}.
  \end{equation*}
$(1)$ Let $\kappa\not=0$. For all $s\le 0$,
there exists $C=C(\sigma,\kappa)$ such that for all $f\in\Sch(\R^d)$,   
\begin{equation*}
   \|I^\eps(f,\kappa)\|_{H^s(\R)}^2 \le
   C\eps^{\frac{\beta-2}{3}+2|s|\frac{\beta+1}{3}}
\|f\|^2_{H^{|s|}(\R)}.
\end{equation*}
$(2)$ If $s\le 0$ and $\beta<2$,
\begin{equation*}
  \|I^\eps(f,0)\|_{H^s(\R)}^2
  =\eps^{(\beta-2)/3}\(\|f\|_{L^2(\R)}^2+o(1)\)\quad\text{as }\eps\to
  0. 
\end{equation*}
$3$ If $s\le 0$ and $\beta=2$, $ \|I^\eps(f,0)\|_{H^s(\R)}^2
  =\|f\|_{H^s(\R)}^2$. 
\end{lemma}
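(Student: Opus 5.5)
The plan is to compute everything on the Fourier side, after a change of variables. Set $h=\eps^{(2-\beta)/3}$ and $\mu=\eps^{-(1+\beta)/3}$, so that $I^\eps(f,\kappa)(x)=f(hx)e^{i\kappa\mu x}$. With the normalization $\widehat{g}(\xi)=\int g(x)e^{-ix\xi}dx$ we have
\begin{equation*}
  \widehat{I^\eps(f,\kappa)}(\xi)=\frac1h \widehat f\Big(\frac{\xi-\kappa\mu}{h}\Big).
\end{equation*}
Substituting $\xi=\kappa\mu+h\eta$ then gives, up to the harmless constant $(2\pi)^{-1}$,
\begin{equation*}
  \|I^\eps(f,\kappa)\|_{H^s}^2=\frac1h\int_\R \big(1+|\kappa\mu+h\eta|^2\big)^{s}\,|\widehat f(\eta)|^2\,d\eta .
\end{equation*}
The prefactor $1/h=\eps^{(\beta-2)/3}$ is exactly the first exponent in all three statements. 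Note that $h\le 1$ since $\beta\le 2$, while $\mu\to\infty$ because $\beta>0$.

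For point (1), with $s\le0$ and $\kappa\neq0$, I would use Peetre's inequality,
\begin{equation*}
  \langle a+b\rangle^{2s}\le 2^{|s|}\langle a\rangle^{2s}\langle b\rangle^{2|s|},
\end{equation*}
applied with $a=\kappa\mu$ and $b=h\eta$. Since $h\le1$ we have $\langle h\eta\rangle\le\langle\eta\rangle$. Moreover $\langle\kappa\mu\rangle^{2s}\le |\kappa|^{2s}\mu^{2s}=|\kappa|^{-2|s|}\eps^{2|s|(1+\beta)/3}$. Combining these bounds gives the claim with $C=2^{|s|}|\kappa|^{-2|s|}$, times $\|f\|_{H^{|s|}}^2$. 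This is the only step needing a small trick, namely Peetre's inequality to move the weight onto $f$.

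For points (2) and (3) we take $\kappa=0$, so the integrand becomes $\langle h\eta\rangle^{2s}|\widehat f(\eta)|^2$.

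If $\beta=2$, then $h=1$ and the formula is exactly $\|f\|_{H^s}^2$, which is point (3).

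If $\beta<2$, then $h\to0$, so $\langle h\eta\rangle^{2s}\to1$ pointwise. The integrand is bounded by $|\widehat f|^2\in L^1$ because $s\le0$. Dominated convergence then gives that the integral equals $\|f\|_{L^2}^2+o(1)$, which yields point (2).

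There is no real obstacle here beyond keeping track of the exponents and the Fourier normalization. The constant in (1) depends only on $s$ and $\kappa$; the statement's "$C(\sigma,\kappa)$" should read $C(s,\kappa)$.
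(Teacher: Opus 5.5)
Your proof is correct and follows the paper's argument. You compute the Fourier transform of the rescaled, modulated profile explicitly, then use Peetre's inequality together with $\eps^{(2-\beta)/3}\le 1$ to move the weight onto $\widehat f$ in point (1), and dominated convergence (respectively the trivial case $h=1$) for points (2) and (3). The only cosmetic difference is that you substitute $\xi=\kappa\mu+h\eta$ before applying Peetre, whereas the paper first bounds by the supremum of $\langle\xi\rangle^{-1}\langle \xi/h-\kappa/\eps\rangle^{-1}$. Your correction of $C(\sigma,\kappa)$ to $C(s,\kappa)$ is also right.
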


\begin{proof}
    We compute
\begin{align*}
  \widehat{I^\eps(f,\kappa)}(\xi) &= 
  \frac{1}{\sqrt{2\pi}}\int 
  e^{-ix \xi} f\(x
  \eps^{(2-\beta)/3}\) e^{i\kappa  x/\eps^{(1+\beta)/3}}dx \\
&=
  \eps^{(\beta-2)/3} \frac{1}{\sqrt{2\pi}}\int 
  e^{-iy \xi/\eps^{(2-\beta)/3} } f\(y\) e^{i\kappa
    y/\eps}dy  \\
&= \eps^{(\beta-2)/3} \widehat f\(
\frac{\xi}{\eps^{(2-\beta)/3}}- \frac{\kappa}{\eps}\). 
\end{align*}
Therefore, 
\begin{align*}
  \|I^\eps(f,\kappa)\|_{H^s(\R)}^2 & = \int \<\xi\>^{2s} \left\lvert
 \widehat{I^\eps(f,\kappa)} (\xi)\right\rvert^2d\xi \\ 
& = \eps^{2\frac{\beta-2}{3}}\int \<\xi\>^{2s} \left\lvert
\widehat f\(
\frac{\xi}{\eps^{(2-\beta)/3}}- \frac{\kappa}{\eps}\)\right\rvert^2d\xi.
\end{align*}
To prove the first point, we write, for $s\le 0$, and $\beta\le 2$,
\begin{align*}
&  \|I^\eps(f,\kappa)\|_{H^s(\R^d)}^2= \\
&= \eps^{2\frac{\beta-2}{3}}\int \<\xi\>^{2s} 
\<\frac{\xi}{\eps^{(2-\beta)/3}}- \frac{\kappa}{\eps}\>^{2s}
\<\frac{\xi}{\eps^{(2-\beta)/3}}- \frac{\kappa}{\eps}\>^{2|s|}
\left\lvert 
\widehat f\(
\frac{\xi}{\eps^{(2-\beta)/3}}-
\frac{\kappa}{\eps}\)\right\rvert^2d\xi\\
&\le \eps^{\frac{\beta-2}{3}}\sup_{\xi \in \R}\(\<\xi\>^{-1}
\<\frac{\xi}{\eps^{(2-\beta)/3}}-
\frac{\kappa}{\eps}\>^{-1}\)^{2|s|}\|f\|^2_{H^{|s|}(\R)} .
\end{align*}
Next, write 
\begin{align*}
\<\frac{\xi}{\eps^{(2-\beta)/3}}-\frac{\kappa}{\eps}\>^{-1} 
= \< \eps^{(\beta-2)/3} 
\left( \xi - \frac{\kappa}{\eps^{(1+\beta)/3}} \right) \>^{-1} 
 \le \< \xi - \frac{\kappa}{\eps^{(1+\beta)/3}} \>^{-1},
\end{align*}
where we have used the assumption $\beta\le 2$ and the property
$0<\eps\le 1$. 
Peetre inequality (see e.g. \cite{Alazard2024}) yields, for
$\kappa\not =0$, 
\begin{equation*}
  \<\xi\>^{-1}\< \xi - \frac{\kappa}{\eps^{(1+\beta)/3}} \>^{-1}\le
  \<\frac{\kappa}{\eps^{(1+\beta)/3}} \>^{-1} \lesssim \eps^{(1+\beta)/3},
\end{equation*}
hence the first point of the lemma. To prove the second point,
write 
\begin{align*}
 \|I^\eps(f,0)\|_{H^s(\R)}^2 & =  \eps^{2\frac{\beta-2}{3}}
\int \<\xi\>^{2s} \left\lvert 
\widehat f\(
\frac{\xi}{\eps^{(2-\beta)/3}}\)\right\rvert^2d\xi\\
&=\eps^{\frac{\beta-2}{3}}\int \<\eps^{(2-\beta)/3}\xi\>^{-2|s|} \left\lvert 
\widehat f\(
\xi\)\right\rvert^2d\xi.
\end{align*}
In the case $\beta<2$, we conclude thanks to the Dominated Convergence
Theorem. 
The third point of the lemma is obvious.  
\end{proof}

\subsection{Small data?}

Applying Lemma~\ref{lem:5.1} to \eqref{eq:CI-v}, we find
for $s\le 0$ and $0<\beta\le 2$:
\begin{equation*}
  \|v_0^\eps\|_{H^s}\lesssim
  \eps^{\frac{1-2\beta}{3}+\frac{\beta-2}{6}+|s|\frac{\beta+1}{3}}=
  \eps^{|s|\frac{\beta+1}{3}-\frac{\beta}{2}}. 
\end{equation*}
The power of $\eps $ is positive if and only if
\begin{equation*}
  |s|>\frac{3\beta}{2(\beta+1)}. 
\end{equation*}
The geometric optics approximation will show that the zero mode is
created by resonant interaction, through a term of size
$\eps$. Suppose that this term is nontrivial at time $\tau$ in
$u^\eps$, then for $\si<0$,
\begin{equation*}
  \|v(\eps^\beta\tau)\|_{H^\si}\approx
  \underbrace{\eps^{\frac{1-2\beta}{3}}}_{\text{scaling factor }\eps^{-\alpha}}\times
  \underbrace{\eps}_{\text{size of the zero mode}}=\eps^{\frac{2}{3}(2-\beta)}. 
\end{equation*}
For the second case of Theorem~\ref{theo:main-kdv}, we will consider
$\beta=2$, and pay more precise attention to the actual form of the
zero mode. For the first case, we will let $\beta=2-\eta$ with
$0<\eta\ll 1$: we have
\begin{equation*}
  \frac{\|v(\eps^\beta\tau)\|_{H^\si}}{\|v_0^\eps\|_{H^s}}\Tend \eps 0
  \infty \quad \Longleftrightarrow\quad
  1+\frac{\eta}{6}<|s|\(1-\frac{\eta}{3}\). 
\end{equation*}
If we assume $s<-1$, we can always find $\eta>0$ such that the above
inequality holds. Therefore, the proof of Theorem~\ref{theo:main-kdv} now
boils down to the justification of geometric optics approximation, in
the sense that we justify the presence and role of the zero mode.

\subsection{Functional setting}
\label{sec:Hseps}

We resume classical methods and notations from geometric optics, which
can be found in e.g.
\cite[Chapter~8]{RaBook2}.  
We consider the $\eps$-dependent norm
\begin{equation}\label{eq:H2eps}
  \|f\|_{H^2_\eps}^2 = \|f\|_{L^2}^2 + \|\eps^2\d_x^2 f\|_{L^2}^2.
\end{equation}
Introduce the scaling
\begin{equation*}
  g(x) = f(\eps x).
\end{equation*}
Sobolev embedding $\|g\|_{L^\infty}\lesssim \|g\|_{H^1}$ implies
$\|\d_x g\|_{L^\infty}\lesssim \|g\|_{H^2}$, which leads to
\begin{equation}\label{eq:sobolev}
  \|\eps\d_x f\|_{L^\infty}
  \lesssim \eps^{-1/2}\|f\|_{H^2_\eps},
\end{equation}
where the implicit constant is independent of $\eps$.
\subsection{The approximation}

In Appendix~\ref{sec:BKW-KdV}, we prove:
\begin{proposition}\label{prop:approx-kdv}
  Let $\alpha_1\in C_0^\infty(M)$. We can find smooth functions
  $a_j,b_j$ and $c_j$ such that
  $u^\eps_{\rm app} $, defined by
\begin{align*}
  u_{\rm app}^\eps(t,x)
  &= \(a_1(t,x) +\eps b_1(t,x)+\eps^2c_1(t,x)\)
    e^{i\phi_1(t,x)/\eps} \\ 
 &\quad +\eps \( a_2(t,x)+\eps
    b_2(t,x)\)e^{2i\phi_1(t,x)/\eps}
  +\eps^{2} a_3(t,x)e^{3i\phi_1(t,x)/\eps}\\
 &\quad +{\rm c.c.}+\eps a_0(t,x),
  \end{align*}
where we recall that $\phi_1$ is given by $\phi_1(t,x)=x+t$,
solves
 \begin{equation*}
  \eps\d_tu_{\rm app}^\eps  +\eps^3\d_x^3u_{\rm app}^\eps =6
  \eps^{2}u_{\rm app}^\eps \d_x u_{\rm app}^\eps + \Sigma^\eps,
\end{equation*}
where,  for all $T>0$,
\begin{equation*}
  \|\Sigma^\eps\|_{L^\infty([0,T],H^2_\eps)}=\O\(\eps^3\). 
\end{equation*}
We have explicitly
\begin{equation*}
    a_0(t,x)  = 2\(
    |\alpha_1(x+3t)|^2-|\alpha_1(x)|^2\).
  \end{equation*}
\end{proposition}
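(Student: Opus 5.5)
The plan is to plug a WKB sum $\sum_k A_k(t,x)e^{ik\phi_1/\eps}$ into the equation and expand in powers of $\eps$. The equation is rewritten as $\eps\d_t u+\eps^3\d_x^3u=3\eps^2\d_x(u^2)$. The basic computation is the action of the linear part on one harmonic $A e^{ik\phi_1/\eps}$, with $\phi_1=x+t$:
\begin{equation*}
  \(\eps\d_t+\eps^3\d_x^3\)\(Ae^{ik\phi_1/\eps}\)=\Big(i(k-k^3)A+\eps\(\d_tA-3k^2\d_xA\)+3ik\eps^2\d_x^2A+\eps^3\d_x^3A\Big)e^{ik\phi_1/\eps}.
\end{equation*}
For the nonlinearity, $3\eps^2\d_x(B e^{im\phi_1/\eps})=(3im\eps B+3\eps^2\d_xB)e^{im\phi_1/\eps}$, where $B$ collects the products of amplitudes whose harmonics add up to $m$ (this is the content of Appendix~\ref{sec:NL}).

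The ansatz of Proposition~\ref{prop:approx-kdv} is inserted, and for each $\eps^p$ with $p\le 2$ and each harmonic $m\in\{0,\pm1,\pm2,\pm3\}$ the coefficient is set to zero. This fixes the profiles in the following order.

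\begin{itemize}
\item \emph{Order $\eps^0$.} Only $k=\pm1$ carry $\O(1)$ amplitudes, and $k-k^3=0$ for them, so the eikonal equation holds.
\item \emph{Order $\eps^1$, mode $1$.} We get $\d_ta_1-3\d_xa_1=0$, hence $a_1=\alpha_1(x+3t)$. The interactions that could feed mode $1$ at this order involve $a_0$ or $a_2$, which already carry a factor $\eps$, so they only enter at order $\eps^2$.
\item \emph{Order $\eps^1$, mode $2$.} The factor $2-8=-6\neq0$ makes the equation algebraic: $-6i a_2=6i a_1^2$, i.e. $a_2=-a_1^2$.
\item \emph{Order $\eps^2$, mode $0$.} The linear part contributes $\eps^2\d_ta_0$ and the nonlinearity contributes $3\eps^2\d_x(2|a_1|^2)$. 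Hence $\d_ta_0=6\d_x|\alpha_1(x+3t)|^2=2\d_t|\alpha_1(x+3t)|^2$. With $a_0(0,\cdot)=0$, so that the zero mode is genuinely created by resonance and not present initially, this gives $a_0=2(|\alpha_1(x+3t)|^2-|\alpha_1(x)|^2)$ as claimed.
\item \emph{Order $\eps^2$, mode $3$.} Since $3-27\neq0$, $a_3$ is determined algebraically from $a_1a_2$.
\item \emph{Order $\eps^2$, mode $1$.} We get an inhomogeneous transport equation $\d_tb_1-3\d_xb_1=F(a_1,\bar a_1,a_2,a_0)$, with source coming from $3i\d_x^2a_1$ and the interactions $\bar a_1a_2$ and $a_0a_1$. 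It is solved with zero data, or with data chosen to match $u^\eps(0)$ up to the corrector.
\item \emph{Order $\eps^2$, mode $2$.} The equation $-6ib_2=\dots$ is again algebraic, with right-hand side built from $\d_ta_2-12\d_xa_2$, $a_1b_1$, etc.
\item \emph{Order $\eps^3$, mode $1$.} Solving the order-$\eps^3$ equation in mode $1$ gives the transport equation for $c_1$.
\end{itemize}

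Everything is obtained by explicit formulas or linear transport with constant speed, so all profiles are smooth. They are compactly supported in $x$, locally uniformly in $t$, since $\alpha_1\in C_0^\infty$; on $\T$ they are periodic, with $\eps=1/N$.

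The remainder $\Sigma^\eps$ then consists only of terms with a prefactor $\eps^3$ or higher: unmatched order-$\eps^3$ terms in modes $0,\pm2,\pm3$, higher harmonics up to $\pm6$, and the $\eps^3\d_x^3$, $\eps^4,\eps^5$ terms. Each has the form $\eps^p F(t,x)e^{im\phi_1/\eps}$ with $p\ge3$ and $F$ smooth, compactly supported or periodic, and bounded on $[0,T]$. For the $H^2_\eps$ norm from \eqref{eq:H2eps}, $\eps^2\d_x^2$ applied to $Fe^{im\phi_1/\eps}$ produces only $\O(1)$ factors, namely $(im+\eps\d_x)^2F$. Hence $\|\Sigma^\eps\|_{L^\infty([0,T],H^2_\eps)}=\O(\eps^3)$.

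The main obstacle I expect is the bookkeeping. I must check that no order-$\eps^1$ or order-$\eps^2$ term is left unmatched in a harmonic that was not included in the ansatz. Harmonics $4,5,6$ only appear at order $\ge\eps^3$, because their generating products carry a factor $\eps^{\ge2}$ before the $\eps$ of the nonlinearity. I also need all the relevant non-resonance factors $k-k^3$, for $k=2,3$, to be nonzero, and the mode-$0$ equation to be solvable. The latter works precisely because the zero mode is resonant, $k-k^3=0$, and appears with amplitude $\eps a_0$, so its equation is a pure time integration at order $\eps^2$.
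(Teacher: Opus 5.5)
Your proposal is correct and follows the same route as the paper (Appendices~A and~B): you cancel harmonic by harmonic at orders $\eps$ and $\eps^2$ and obtain $a_1=\alpha_1(x+3t)$, $a_2=-a_1^2$, $-24ia_3=18ia_1a_2$, a transport equation for $b_1$ with zero data, an algebraic equation for $b_2$, and $\d_t a_0=6\d_x|a_1|^2$ with $a_0(0)=0$. The only difference is that you also fix $c_1$ from the order-$\eps^3$ mode-$1$ equation, which is harmless but unnecessary: $\eps^2 c_1$ only enters at order $\eps^3$, so the paper stops the hierarchy at $\eps^2$ and leaves $c_1$ unspecified, with that contribution going into $\Sigma^\eps$.
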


We now prove that this result implies:
\begin{proposition}\label{prop:OG}
  Let $\alpha_1\in C_0^\infty(M)$, and $u^\eps$ solve \eqref{eq:u-kdv}
  with
  \begin{equation*}
    u^\eps(0,x) = \alpha_1(x)e^{ix/\eps}
    -\eps\alpha_1^2(x)e^{2ix/\eps}+{\rm c.c.} 
  \end{equation*}
  Then for any $T>0$,
  \begin{align*}
    u^\eps(t,x) &= \(\alpha_1(x+3t)+\eps b_1(t,x)\) e^{i(t+x)/\eps} + \eps
                  a_0(t,x) - \eps \alpha_1(x+3t)^2 e^{2i(t+x)/\eps}\\
    &\quad+{\rm c.c.}
    +\O(\eps^2), 
  \end{align*}
  where $a_0$ and $b_1$ are given by Proposition~\ref{prop:approx-kdv}
  (see \eqref{eq:b1} for $b_1$),
   and the $\O(\eps^2)$ is measured in $L^\infty(0,T;H^2_\eps)$.
 \end{proposition}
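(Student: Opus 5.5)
The plan is to compare $u^\eps$ with the approximate solution $u^\eps_{\rm app}$ of Proposition~\ref{prop:approx-kdv} through an energy estimate in $H^2_\eps$. First I check that the two initial data agree up to $\O(\eps^2)$ in $H^2_\eps$. At $t=0$, the construction in the appendix should give $a_1(0)=\alpha_1$ and $a_2(0)=-\alpha_1^2$; this is consistent with $a_2=-a_1^2$ being forced by the $\eps^1$ equation on the $e^{2i\phi_1/\eps}$ mode. It should also give $a_0(0)=0$, which is visible on the explicit formula. The constraints $b_1(0)=0$ and $b_2(0)=0$ are taken as free choices in the construction. Under these choices, $u^\eps(0)-u^\eps_{\rm app}(0)$ consists only of the terms $\eps^2 c_1(0)$ and $\eps^2 a_3(0)$ (and their conjugates), times oscillating exponentials. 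Since $\eps\d_x$ acting on $g(x)e^{ik x/\eps}$ costs $\O(1)$, these terms are $\O(\eps^2)$ in $H^2_\eps$. The same remark shows that dropping $\eps^2c_1$, $\eps^2 b_2$ and $\eps^2 a_3$ from $u^\eps_{\rm app}$ costs $\O(\eps^2)$ in $L^\infty(0,T;H^2_\eps)$. It also shows that $a_1=\alpha_1(x+3t)$ from the transport equation. Hence the statement reduces to proving $\|w^\eps\|_{L^\infty(0,T;H^2_\eps)}=\O(\eps^2)$, where $w^\eps=u^\eps-u^\eps_{\rm app}$.

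The error $w$ solves
\begin{equation*}
  \eps\d_t w+\eps^3\d_x^3 w = 6\eps^2\(w\d_x w + u_{\rm app}\d_x w + w\d_x u_{\rm app}\) - \Sigma^\eps .
\end{equation*}
Dividing by $\eps$, the nonlinear terms become $6\eps(w\d_x w+u_{\rm app}\d_x w+w\d_x u_{\rm app})$, and each of them contains the semiclassical derivative $\eps\d_x$. Note that $\eps\d_x u_{\rm app}=\O(1)$ in $L^\infty$ together with its $\eps\d_x$-derivatives, and that $\Sigma^\eps/\eps=\O(\eps^2)$ in $H^2_\eps$.

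I then perform $L^2$ energy estimates on $w$ and on $\eps^2\d_x^2 w$. The Airy term $\eps^2\d_x^3$ is skew-adjoint, so it drops out. The quasilinear terms are handled by integration by parts:
\begin{equation*}
  \int u_{\rm app}\,\eps\d_x w\, w=-\tfrac12\int \eps\d_x u_{\rm app}\, w^2,
\end{equation*}
and similarly at the level of $\eps^2\d_x^2w$. In that case the commutator terms involve at most $\eps\d_x$-derivatives of $u_{\rm app}$ of order three, which are bounded, times $\|w\|_{H^2_\eps}^2$. For the genuinely nonlinear term $w\,\eps\d_x w$, the highest-order contribution is again integrated by parts. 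The remaining terms are bounded by $\|\eps\d_x w\|_{L^\infty}\|w\|_{H^2_\eps}^2\lesssim \eps^{-1/2}\|w\|_{H^2_\eps}^3$ using \eqref{eq:sobolev}. This yields
\begin{equation*}
  \frac{d}{dt}\|w\|_{H^2_\eps}^2\lesssim \(1+\eps^{-1/2}\|w\|_{H^2_\eps}\)\|w\|_{H^2_\eps}^2+\eps^2\|w\|_{H^2_\eps}.
\end{equation*}

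The final step is a bootstrap. As long as $\|w\|_{H^2_\eps}\le \eps^{1/2}$, the nonlinear factor is bounded, and Gronwall's lemma gives $\|w(t)\|_{H^2_\eps}\le C_T\eps^2$ on $[0,T]$. Since $\eps^2\ll\eps^{1/2}$, a continuity argument closes the bootstrap. Local existence of a smooth $u^\eps$ holds for these $C_0^\infty$ data on $\R$ or $\T$, and the a priori bound shows the solution exists on the whole interval $[0,T]$.

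I expect the main obstacle to be the energy estimate at the $H^2_\eps$ level. There the derivative loss from $u\d_x u$ must be absorbed exactly by integration by parts, including the term $\eps^2\d_x^2(w\,\eps\d_x w)$, where the top-order part must be seen to be of the form $w\,\eps\d_x(\eps^2\d_x^2w)$ (plus a lower-order part such as $3\,\eps\d_x w\,\eps^2\d_x^2 w$). The other point to check is that the $\eps$ weights match, so that the loss $\eps^{-1/2}$ from \eqref{eq:sobolev} is compensated by $\|w\|=\O(\eps^2)$.
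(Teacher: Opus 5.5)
Your proposal is correct and follows essentially the same route as the paper. The paper also controls the error $w^\eps=u^\eps-u^\eps_{\rm app}$ by $L^2$ and $\eps^2\d_x^2$ energy estimates (integration by parts on the top-order term, remaining terms bounded using $\|(\eps\d_x)^k u^\eps_{\rm app}\|_{L^\infty}\lesssim 1$) and closes a bootstrap via \eqref{eq:sobolev} and Gr\"onwall; your condition $\|w^\eps\|_{H^2_\eps}\le \eps^{1/2}$ plays the role of the paper's $\|\eps\d_x w^\eps\|_{L^\infty}\le 1$. One small inaccuracy: $b_2(0)$ is not a free choice, since $b_2$ is determined algebraically by the $\O(\eps^2)$ equation on the second harmonic, and in fact $b_2(0)=-2i\alpha_1\alpha_1'\neq 0$; but this term carries a factor $\eps^2$, so $w^\eps(0)$ is still $\O(\eps^2)$ in $H^2_\eps$ and nothing in your argument changes.
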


\subsection{Proof of Proposition~\ref{prop:OG}}\label{sec:preuve-OG-kdv}
Let $w^\eps=u^\eps - u_{\rm app}^\eps $ denote the error:
\begin{equation*}
   \eps\d_tw^\eps  +\eps^3\d_x^3w^\eps =6
  \eps^{2}\(u^\eps\d_x u^\eps-u_{\rm app}^\eps \d_x u_{\rm app}^\eps
  \)- \Sigma^\eps .
\end{equation*}
Decompose
\begin{equation*}
  u^\eps\d_x u^\eps-u_{\rm app}^\eps \d_x u_{\rm app}^\eps= u^\eps\d_x
  w^\eps +w^\eps \d_x u_{\rm app}^\eps,
\end{equation*}
so that
\begin{equation}
  \label{eq:w}
   \eps\d_tw^\eps  +\eps^3\d_x^3w^\eps =6
  \eps^{2}\(u^\eps\d_x
  w^\eps +w^\eps \d_x u_{\rm app}^\eps\)-\Sigma^\eps .
\end{equation}
We first prove energy estimates in $L^2$ and $H^2$, before using a
bootstrap argument.
\smallbreak

Multiply \eqref{eq:w} by $w^\eps$ and integrate in space:
\begin{align*}
  \frac{\eps}{2}\frac{d}{dt}\|w^\eps\|_{L^2}^2
  &= 6 \eps^2 \int
  w^\eps \( u^\eps\d_x
    w^\eps +w^\eps \d_x u_{\rm app}^\eps\)-\int w^\eps\Sigma^\eps\\
&=- 3 \eps^2 \int
 \d_x u^\eps
    \(w^\eps\)^2 +6 \eps^2 \int \(w^\eps\)^2 \d_x u_{\rm
  app}^\eps-\int w^\eps\Sigma^\eps 
  . 
\end{align*}
We infer
\begin{equation*}
 \frac{\eps}{2}\frac{d}{dt}\|w^\eps\|_{L^2}^2\lesssim \eps
 \(\|\eps\d_x u^\eps\|_{L^\infty}+ \|\eps\d_x u_{\rm
   app}^\eps\|_{L^\infty}\) \|w^\eps\|_{L^2}^2 +
 \eps^3 \|w^\eps\|_{L^2},
\end{equation*}
hence
\begin{equation*}
  \|w^\eps(t)\|_{L^2}\le \|w^\eps(0)\|_{L^2}+C\int_0^t
  \(1+\|\eps\d_x w^\eps(s)\|_{L^\infty}\) \|w^\eps(s)\|_{L^2}ds +C\eps^2,
\end{equation*}
where we have used
\begin{equation*}
  \|\eps\d_x u_{\rm
   app}^\eps\|_{L^\infty}\lesssim 1.
\end{equation*}

We now pass to the energy estimate in $H^2$. 
Apply the operator $\eps^2\d_x^2$ to \eqref{eq:w}:
\begin{equation*}
  \(\eps\d_t  +\eps^3\d_x^3\)\eps^2\d_x^2 w^\eps =6
  \eps^{4}\d_x^2\(u^\eps\d_x
  w^\eps +w^\eps \d_x u_{\rm app}^\eps\)-\eps^2\d_x^2\Sigma^\eps .
\end{equation*}
Multiply by $\eps^2\d_x^2 w^\eps$ and integrate in space:
\begin{equation*}
  \frac{\eps}{2}\frac{d}{dt}\|\eps^2\d_x^2 w^\eps\|_{L^2}^2 = 6
  \eps^2\int \eps^2 \d_x^2 w^\eps  \eps^{2}\d_x^2\(u^\eps\d_x
  w^\eps +w^\eps \d_x u_{\rm app}^\eps\)- \int \eps^2 \d_x^2
  w^\eps\eps^2\d_x^2\Sigma^\eps. 
\end{equation*}
The last term is obviously controlled by
\begin{equation*}
  \|\eps^2\d_x^2 w^\eps\|_{L^2} \|\eps^2\d_x^2\Sigma^\eps\|_{L^2} \le
  \|\eps^2\d_x^2 w^\eps\|_{L^2} \|\Sigma^\eps\|_{H^2_\eps} \lesssim
  \eps^3 \|\eps^2\d_x^2 w^\eps\|_{L^2} . 
\end{equation*}
For the first part of the right hand side, we introduce the commutator
\begin{equation*}
  \eps\int \eps^2 \d_x^2 w^\eps  \eps^{2}\d_x^2\(u^\eps\d_x
  w^\eps \) =  \eps\int \eps^2 \d_x^2 w^\eps  \times u^\eps\times \eps^{2}\d_x^3\
  w^\eps  +  \int \eps^2 \d_x^2 w^\eps\left[
    \eps^2\d_x^2,u^\eps\right]\eps\d_x  w^\eps. 
\end{equation*}
Integrating by parts,
\begin{equation*}
  \left|\eps\int \eps^2 \d_x^2 w^\eps  \times u^\eps\times \eps^{2}\d_x^3\
  w^\eps \right|=\frac{1}{2} \left|\int \(\eps^2 \d_x^2 w^\eps\)^2 \eps\d_x
  u^\eps\right|\lesssim \(1+\|\eps \d_x w^\eps\|_{L^\infty}\)
  \|\eps^2\d_x w^\eps||_{L^2}^2. 
\end{equation*}
On the other hand,
\begin{equation*}
  \left\| \left[ \eps^2\d_x^2,u^\eps\right] \eps\d_x
  w^\eps\right\|_{L^2}
  \lesssim \|\eps^2\d_x^2 w^\eps \eps\d_x u^\eps\|_{L^2} + \|\eps\d_x
    w^\eps\eps^2\d_x^2 u^\eps\|_{L^2}.
\end{equation*}
The first term is controlled by
\begin{equation*}
  \|\eps\d_x u^\eps\|_{L^\infty} \|\eps^2\d_x^2
    w^\eps\|_{L^2}\lesssim \(1+ \|\eps\d_x w^\eps\|_{L^\infty} \)
    \|\eps^2\d_x^2  w^\eps\|_{L^2}. 
\end{equation*}
We may also write
\begin{align*}
  \|\eps\d_x
    w^\eps\eps^2\d_x^2 u^\eps\|_{L^2}
  &\lesssim \|\eps\d_x w^\eps\|_{L^\infty} \|\eps^2\d_x^2
    w^\eps\|_{L^2}+ \|\eps \d_x w^\eps\eps^2\d_x^2
    u^\eps_{\rm app}\|_{L^2}\\
  &\lesssim \|\eps\d_x w^\eps\|_{L^\infty}  \|\eps^2\d_x^2
    w^\eps\|_{L^2}+ \|\eps\d_x w^\eps\|_{L^2},
\end{align*}
where we have used the bound $\|\eps^2\d_x^2
    u^\eps_{\rm app}\|_{L^\infty}\lesssim 1$. 
Summing the previous estimates, 
\begin{align*}
  \|w^\eps(t)\|_{H^2_\eps}\le \|w^\eps(0)\|_{H^2_\eps} + C\int_0^t
  \(1+\|\eps\d_x w^\eps(s)\|_{L^\infty}\) \|w^\eps(s)\|_{H^2_\eps}ds
  +C\eps^2.  
\end{align*}
We use a bootstrap argument: as long as 
\begin{equation}\label{eq:bootstrap}
  \|\eps\d_x w^\eps(t)\|_{L^\infty}\le 1, 
\end{equation}
which is true at $t=0$, hence on $[0,t^\eps]$ for some $t^\eps>0$ by
continuity, Gr\"onwall lemma yields
\begin{equation*}
   \|w^\eps(t)\|_{H^2_\eps}\le \|w^\eps(0)\|_{H^2_\eps}e^{Ct} +
   C\eps^2 e^{Ct}. 
 \end{equation*}
 By construction, $\|w^\eps(0)\|_{H^2_\eps}=\O(\eps^2)$: 
 as long as  \eqref{eq:bootstrap} holds,
 \begin{equation}\label{eq:erreur}
 \sup_{0\le t\le T}  \|w^\eps(t)\|_{H^2_\eps}\le
   C(T)\eps^2. 
 \end{equation}
 Estimate \eqref{eq:sobolev} implies that
 for all $T>0$, there exists  $\eps(T)>0$ and $C(T)$ 
 such that for $0<\eps\le \eps_T$, \eqref{eq:erreur} is true.

 \subsection{Remaining arguments for the proof of
   Theorem~\ref{theo:main-kdv}}
\label{sec:end}

We give more details regarding the proof of the second assertion of
Theorem~\ref{theo:main-kdv}, where we consider $\beta=2$. Let $\alpha\in
C_0^\infty(M)$ be any 
nontrivial function, such that in addition $\operatorname{supp}
\alpha\subsetneq \T$ in the case $M=\T$. Fix $\tau>0$ such that
\begin{equation*}
  \operatorname{supp}\alpha\cap  \operatorname{supp}\alpha (\cdot
  +3\tau)=\emptyset. 
\end{equation*}
Fix $-K<s_2<-1$ like in Theorem~\ref{theo:main-kdv}:
\begin{equation*}
  \inf_{-K\le \si\le s_2} \||\alpha|^2\|_{H^\si(M)}>0,
\end{equation*}
hence,
\begin{equation*}
  \inf_{-K\le \si\le s_2}
  \left\||\alpha(\cdot+3\tau)|^2-|\alpha|^2|\right\|_{H^\si(M)}>0. 
\end{equation*}
Fix $\delta>0$: for $\alpha_1=N \alpha$ with a sufficiently large
constant $N>0$,
\begin{equation}\label{eq:a0unif}
  \inf_{-K\le \si\le s_2}\|a_0(\tau)\|_{H^\si(M)} = 
2\inf_{-K\le \si\le s_2}
\left\||\alpha_1(\cdot+3\tau)|^2-
  |\alpha_1|^2|\right\|_{H^\si(M)}>\frac{2}{\delta}.
\end{equation}
With this $\alpha_1$ fixed, let $v_0^\eps(x) =
\eps^{-1}\alpha_1(x)e^{ix/\eps} -\alpha_1(x)^2
e^{2ix/\eps}+\text{c.c.}$, that is \eqref{eq:CI-v} with $\beta=2$, up
to the correction (preparation of the initial data) introduced in
Proposition~\ref{prop:approx-kdv} (see Appendix~\ref{sec:BKW-KdV}). As
we have seen in Section~\ref{sec:scaling}, for $s_1<-1$,
\begin{equation*}
  \|v_0^\eps\|_{H^{s_1}}\lesssim \eps^{|s_1|-1}.
\end{equation*}
In view of Proposition~\ref{prop:OG}, the difference
$r^\eps = u^\eps - u_{\rm app}^\eps$
is such that
\begin{equation*}
  \|r^\eps(\tau)\|_{L^2}\lesssim \eps^2.
\end{equation*}
On the other hand, invoking Lemma~\ref{lem:5.1} again (like for the
above estimate of $v_0^\eps$), for $\si<-1$,
\begin{equation*}
  \|\eps^{-1}u^\eps_{\rm app}(\tau)-a_0(\tau)\|_{H^\si} \lesssim
  \eps^{|\si|-1}, 
\end{equation*}
hence
\begin{equation*}
  \|v^\eps(\eps^2\tau)- a_0(\tau)\|_{H^\si} \lesssim  \eps^{|\si|-1}
  +\eps^2. 
\end{equation*}
Setting $t^\eps=\eps^2\tau$, the second point of
Theorem~\ref{theo:main-kdv} follows from \eqref{eq:a0unif}. The last
point of   Theorem~\ref{theo:main-kdv} is then direct, by choosing
$\eps>0$ sufficiently small, given $\delta>0$.

\section{KP: proof of Theorem~\ref{theo:main-kp}}
\label{sec:kp}
The scheme of the proof of Theorem~\ref{theo:main-kp} is the same as
for Theorem~\ref{theo:main-kdv}, so we emphasize the main
modifications. 
The scaling we now consider is given by
\begin{equation*}
  v(t,x,y)=\frac{1}{\eps}u^\eps\(\frac{t}{\eps^2},x,y\).
\end{equation*}
The equation satisfied by $u^\eps$ reads
\begin{equation}\label{eq:kp-semi}
 \eps  \d_t u^\eps + \eps^3\d_x^3 u^\eps +\lambda \eps^3\d_x^{-1}\d_y^2 u^\eps
  +\eps^2u^\eps\d_x u^\eps=0.  
\end{equation}
The major feature for KP equation is the presence and the
understanding of the operator $\d_x^{-1}$ (see
e.g. \cite{KleinSaut21}). In the present framework, we note that its
action  on rapid oscillations requires some caution. Indeed,
\begin{equation*}
  (\eps\d_x)^{-1}\(a(x)e^{ik_1x/\eps}\)
  =\F^{-1}\(\frac{i}{\eps\xi}\hat a\(\xi-\frac{k_1}{\eps}\)\),
\end{equation*}
so if $k_1\not =0$, this function need not belong to $L^2(\R)$. We
therefore write the equation for $u^\eps$ as 
\begin{equation}\label{eq:u-compl}
  \eps^2 \d_x \d_t u^\eps + \eps^4\d_x^4 u^\eps +\lambda \eps^4\d_y^2 u^\eps
  +\frac{\eps}{2} \(\eps \d_x\)^2\(u^\eps\)^2=0.
\end{equation}
We emphasize that for $\eps>0$ fixed, we consider smooth solutions,
and so the order of derivatives can be chosen as convenient. 
\subsection{Derivation of the WKB expansion}
\label{sec:OG-KP}

In accordance with the factor $\eps^4$ in front of the $y$-derivative,
we allow some strong anisotropy in the oscillations for $u^\eps$, 
and we assume, at leading order,
\begin{equation*}
  u^\eps(0,x,y) = \alpha_1(x,y)
  e^{ik_1x/\eps+ik_2y/\eps^2}+{\rm c.c.}
\end{equation*}
We keep $k_1,k_2\not =0$  as parameters in our presentation. We just
note that like for KdV,
$k_1,k_2$ and $\eps$ have to be chosen accordingly in the periodic
setting $(x,y)\in \T^2$.
The characteristic phase associated with the initial oscillations,
\begin{equation*}
  \phi^\eps(0,x,y) = k_1\frac{x}{\eps}+k_2\frac{y}{\eps^2},
\end{equation*}
is of the form
\begin{equation}\label{eq:eik-KP}
  \phi^\eps(t,x,y) =
  k_1\frac{x}{\eps}+k_2\frac{y}{\eps^2}+\omega\frac{t}{\eps}, 
\end{equation}
and the WKB hierarchy will show that $\omega$ is given by the usual
dispersion relation for KP equations,
\begin{equation*}
  \omega = k_1^3-\lambda \frac{k_2^2}{k_1}. 
\end{equation*}
Because of the specific form of the oscillations in $x$ and $y$, the
analogue of $H^k_\eps$ is
\begin{equation}\label{eq:Hkeps}
  \|f\|_{H^k_\eps}^2= \|f\|_{L^2}^2 + \|\eps^k \d_x^kf\|_{L^2}^2 +
  \|\eps^{2k} \d_y^kf\|_{L^2}^2, 
\end{equation}
and the inequality leading to \eqref{eq:sobolev} is replaced with
\begin{equation*}
  \|f\|_{L^\infty}\lesssim \eps^{-3/2}\|f\|_{H^k_\eps},
\end{equation*}
provided that $k>d/2=1$. This follows by considering (now
that we have introduced some anisotropy in the definition of
the $H^k_\eps$ norm)
\begin{equation*}
  g(x,y)=f(\eps x,\eps^2 y),
\end{equation*}
along with the Sobolev embedding (where $\eps$ is not involved)
\begin{equation*}
  \|g\|_{L^\infty(\R^2)}\le C(k) \|g\|_{H^k(\R^2)}, \quad k>1.
\end{equation*}
Therefore, the analogue of \eqref{eq:sobolev} reads
\begin{equation*}
 \|\eps\d_xf\|_{L^\infty}\lesssim \eps^{-3/2}\|f\|_{H^k_\eps},\text{
   provided that }k>2.  
\end{equation*}
The general strategy, in order to use standard energy estimates for KP
equations, consists in constructing an approximate solution $u_{\rm
  app}^\eps$ solving \eqref{eq:u-compl} up to some small source term,
which is itself an $x$-derivative,
\begin{equation}\label{eq:uapp-KP}
   \eps^2 \d_x \d_t u_{\rm app}^\eps + \eps^4\d_x^4 u_{\rm app}^\eps
   +\lambda \eps^4\d_y^2 u_{\rm app}^\eps 
  +\frac{\eps}{2} \(\eps \d_x\)^2\(u_{\rm app}^\eps\)^2=\eps\d_x \Sigma^\eps,
\end{equation}
where $\Sigma^\eps$ is small in $H^k_\eps$ for a minimal $k$. The
notion of minimality concerns the smallest integer $k$ such that we may
infer, like in the KdV case,
\begin{equation*}
  \|v^\eps(\eps^2\tau)-a_0(\tau)\|_{H^{\si_1,\si_2}}=o(1),
\end{equation*}
where $a_0$ will appear by a mechanism
similar to the KdV case. 
The requirements are twofold, in the study of $u^\eps$:
\begin{itemize}
\item We need a remainder term which is $o(\eps)$ in $L^2(\R^2)$ to
  make sure that the term $\eps a_0$ is relevant.
\item We need to make sure that $\|\eps \d_x w^\eps\|_{L^\infty}\le 1$
  for $0<\eps\ll 1$ for the bootstrap argument, where
  $w^\eps=u^\eps-u_{\rm app}^\eps$ (we will actually use a stronger
  inequality).  
\end{itemize}
The first condition implies that the expansion defining $u^\eps_{\rm
  app}$ must go up to terms of order
$\eps^2$ at least. The second condition will follow if we have
\begin{equation*}
  \eps^{-3/2}\|w^\eps\|_{H^k_\eps}\ll 1\quad\text{for some }k>2.
\end{equation*}
Therefore, we choose to measure the smallness of $w^\eps$ in $H^3_\eps$,
and expect at least
\begin{equation*}
  \|w^\eps\|_{H^3_\eps}=\O(\eps^2). 
\end{equation*}
Factorizing $\eps\d_x$, and recalling the factor $\eps$ in front of the
time derivative, this will follow from the construction provided that
we have 
\begin{equation*}
\|\Sigma^\eps\|_{H^3_\eps}=\O(\eps^3).   
\end{equation*}
The analogue of Proposition~\ref{prop:approx-kdv}, stemming from
Appendix~\ref{sec:BKW-KP} is:
\begin{proposition}\label{prop:approx-kp}
  Let $\alpha_1\in C_0^\infty(M^2)$. We can find smooth functions
  $a_j,b_j$ and $c_j$ such that
  $u^\eps_{\rm app} $, defined by
\begin{equation*}
  u^\eps_{\rm app} = \eps\d_x\(\sum_{j=1}^3 \eps^{j-1}\(a_j+\eps
  b_j+\eps^2 c_j\) e^{ij\phi^\eps} + {\rm c.c.}\)+ \eps a_0 ,
 \end{equation*}
where $\phi^\eps$ is given by \eqref{eq:eik-KP},
solves \eqref{eq:uapp-KP}
where,  for all $T>0$,
\begin{equation*}
  \|\Sigma^\eps\|_{L^\infty([0,T],H^2_\eps)}=\O\(\eps^3\). 
\end{equation*}
We have explicitly
\begin{equation*}
  a_1(t,x,y) = a_1\( 0,x+\(3k_1^2 +\lambda\frac{k_2^2}{k_1^2}\)t,y\)=
  \alpha_1 \(x+\(3k_1^2 +\lambda\frac{k_2^2}{k_1^2}\)t,y\),
\end{equation*}
and 
\begin{equation*}
    \d_t a_0  = -k_1^2\d_x|a_1|^2,\quad a_{0\mid t=0}=0.
  \end{equation*}
\end{proposition}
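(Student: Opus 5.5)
The plan is to build the WKB hierarchy for \eqref{eq:u-compl} directly, in the factorized form suggested by the statement. Set
\[
U^\eps:=\sum_{j=1}^3 \eps^{j-1}\(a_j+\eps b_j+\eps^2 c_j\)e^{ij\phi^\eps}+{\rm c.c.},\qquad u_{\rm app}^\eps=\eps\d_x U^\eps+\eps a_0 .
\]
Every linear term in \eqref{eq:u-compl} contains at least one $x$-derivative once we substitute $\eps\d_x U^\eps$. The nonlinear term $\frac{\eps}{2}(\eps\d_x)^2(u^\eps_{\rm app})^2$ is also an exact $\eps\d_x$-derivative. So every contribution can be written as $\eps\d_x(\cdot)$, and we only need to cancel the quantity inside. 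The exception is $\eps^2\d_x\d_t(\eps a_0)+\lambda\eps^5\d_y^2a_0$: this non-oscillating $y$-term is $\O(\eps^5)$ and is harmless once written as $\eps\d_x$ of $\lambda\eps^4\d_x^{-1}\d_y^2 a_0$. To make that legitimate, I will arrange for $a_0$ to be an $x$-derivative, which the equation $\d_t a_0=-k_1^2\d_x|a_1|^2$ provides: $a_0=-k_1^2\d_x\int_0^t|a_1|^2$.

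\textbf{Oscillating modes.} When $\eps\d_x$ acts on $a e^{ij\phi^\eps}$ we get $(ijk_1 a+\eps\d_x a)e^{ij\phi^\eps}$, and $\eps^2\d_y$ gives $(ijk_2a+\eps^2\d_y a)$. Collecting powers of $\eps$ on each harmonic $e^{ij\phi^\eps}$ gives the following hierarchy.

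At order $\eps^1$, harmonic $j=1$: the eikonal relation $\omega k_1=k_1^4-\lambda k_2^2$, which is the stated dispersion relation $\omega = k_1^3-\lambda k_2^2/k_1$.

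At order $\eps^2$, harmonic $1$: a transport equation $\d_ta_1-(3k_1^2+\lambda k_2^2/k_1^2)\d_xa_1=0$. I have to be careful here, because $\d_y$ enters only at order $\eps^3$ due to the $\eps^2$ scaling. This transport equation gives the stated formula for $a_1$.

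Harmonic $2$ at order $\eps^2$: $a_2$ is determined algebraically by $a_1^2$. This works because $2\phi^\eps$ is non-characteristic: $(2k_1)^4-\lambda(2k_2)^2-2\omega\cdot 2k_1\neq0$, i.e.\ $12k_1^4\ne0$.

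Harmonic $0$: the zero mode is produced by $a_1\bar a_1$ interactions. Writing $\eps^2\d_x\d_t(\eps a_0)$ against $\frac{\eps}{2}\eps^2\d_x^2(2|ik_1a_1|^2\cdot\text{const})$ yields $\d_ta_0=-k_1^2\d_x|a_1|^2$ with $a_0(0)=0$, after adding the initial-data correction exactly as in the KdV case.

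Higher correctors: next, $b_1$ solves an inhomogeneous transport equation, obtained from the solvability condition at order $\eps^3$ on harmonic $1$, with source involving $\d_y^2$-terms and $a_0a_1$, $\bar a_1a_2$. Then $b_2,a_3,c_1,\dots$ are obtained either algebraically (non-characteristic harmonics $2,3$, since $j^3-j\neq0$) or by transport. The number of terms is chosen so that everything left over is $\O(\eps^4)$ inside the $\eps\d_x$, i.e.\ $\Sigma^\eps=\O(\eps^3)$ after accounting for the normalization. All profiles remain compactly supported in $x$, or smooth on $\T^2$, and Schwartz-regular.

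\textbf{Remainder estimate.} $\Sigma^\eps$ is a finite sum of terms $\eps^m F(t,x,y)e^{ij\phi^\eps}$ with $m\ge 3$ and $F$ smooth and bounded in $H^\infty$ on $[0,T]$. Since $\eps\d_x$ and $\eps^2\d_y$ act on $e^{ij\phi^\eps}$ with $\O(1)$ symbols, the $H^2_\eps$ norm, and in fact every $H^k_\eps$ norm, of each term is $\O(\eps^m)$. The one point needing care is the $\d_x^{-1}$ applied to non-oscillating terms: every zero-mode source must be an exact $x$-derivative, and I expect this to hold by the conservative structure of the hierarchy.

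\textbf{Main obstacle.} I expect the hard part to be the bookkeeping at order $\eps^3$ for the zero mode and harmonic $1$. There I must check that the $\d_y^2$ contribution to the mean and all mean sources are $x$-derivatives, so that the $\eps\d_x$ factorization survives, and that the solvability conditions close. My first attempt will be to integrate every mean-mode equation in $x$ explicitly, using compact support of $\alpha_1$.
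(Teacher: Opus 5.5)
Your plan is essentially the paper's construction in Appendix~\ref{sec:BKW-KP}: the same ansatz with $\eps\d_x$ factored out, the dispersion relation, transport equations for $a_1$ and $b_1$, algebraic $a_2,b_2,a_3$ via $(j^4-j^2)k_1^4\neq0$, and $\d_t a_0=-k_1^2\d_x|a_1|^2$. Your remark that $a_0=-k_1^2\d_x\int_0^t|a_1|^2$ makes the term $\lambda\eps^4\d_x^{-1}\d_y^2a_0$ in $\Sigma^\eps$ legitimate settles the only mean-mode issue, which the paper leaves tacit, so your ``main obstacle'' is already resolved.

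Do fix the $\eps$-bookkeeping, since the quantity inside $\eps\d_x$ is exactly $\Sigma^\eps$. The eikonal relation sits at $\O(1)$, the $a_1$ and $a_2$ equations at $\O(\eps)$, and the $b_1$ and $a_0$ equations at $\O(\eps^2)$. At that last order $\d_y a_1$ enters, not $\d_y^2 a_1$. With this counting, $\Sigma^\eps=\O(\eps^3)$ follows directly, with no extra ``normalization'' step.
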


\subsection{Justification of the WKB expansion}

Let $w^\eps = u^\eps-u^\eps_{\rm app}$, where we may choose, to
simplify the presentation, to impose $w^\eps=0$ at $t=0$
(well-prepared initial data). We note that this assumption ensures
that $u^\eps_{\mid t=0}$, like $u^\eps_{{\rm app}\mid t=0}$, is the
$x$-derivative of a smooth function (for $\eps>0$ fixed). By
construction, $w^\eps$ solves 
\begin{equation*}
  \eps^2\d_x\d_t w^\eps +\eps^4\d_x^4 w^\eps
  +\lambda\eps^4\d_y^2w^\eps +
  \frac{\eps}{2}(\eps\d_x)^2\(\(u^\eps\)^2 - \(u^\eps_{\rm app}\)^2\)
  =  -\eps\d_x \Sigma^\eps,\quad w^\eps_{\mid t=0} = 0. 
\end{equation*}
We can now apply the operator $(\eps\d_x)^{-1}$, to get
\begin{equation}\label{eq:w-kp}
 \eps\d_t w^\eps +\eps^3\d_x^3 w^\eps
  +\lambda\eps^3\d_x^{-1}\d_y^2w^\eps +
  \frac{\eps^2}{2}\d_x\(\(u^\eps\)^2 - \(u^\eps_{\rm app}\)^2\)
  =  - \Sigma^\eps,\quad w^\eps_{\mid t=0} = 0.  
\end{equation}
We perform energy estimates, using the fact that $\d_x^3$ and
$\d_x^{-1}\d_y^2$ are skew-adjoint (the linear propagator is unitary
on $H^s(M^2)$ for all $s\in \R$). For the energy estimate in $L^2$,
we can resume the computation from
Section~\ref{sec:preuve-OG-kdv}, and get
\begin{equation*}
  \|w^\eps(t)\|_{L^2}\le C\int_0^t
  \(1+\|\eps\d_x w^\eps(s)\|_{L^\infty}\)\|w^\eps(s)\|_{L^2}ds +C\eps^2,
\end{equation*}
since we still have
\begin{equation*}
  \|\eps\d_x u_{\rm
   app}^\eps\|_{L^\infty}\lesssim 1.
\end{equation*}
We then apply the operator $\eps^3 \d_x^3$ to \eqref{eq:w-kp}, multiply by
$\eps^3\d_x^3 w^\eps$ and integrate:
\begin{equation*}
  \frac{\eps}{2}\frac{d}{dt}\|\eps^3\d_x^3 w^\eps\|_{L^2}^2
  =  -\eps^2\int_{M^2}\eps^3\d_x^3 w^\eps \eps^3\d_x^3\(u^\eps \d_x
    w^\eps+w^\eps\d_x u^\eps_{\rm app}\) - \int_{M^2} \eps^3\d_x^3
    w^\eps \eps^3 \d_x^3\Sigma^\eps.
\end{equation*}
The last term is controlled by Cauchy-Schwarz inequality and
Proposition~\ref{prop:approx-kp},
\begin{equation*}
\left|\int_{M^2} \eps^3\d_x^3
    w^\eps \eps^3 \d_x^3\Sigma^\eps  \right|\le \|\eps^3\d_x^3
  w^\eps\|_{L^2}\|\Sigma^\eps\|_{H^3_\eps}. 
\end{equation*}
Introducing the commutator,
\begin{equation*}
  \eps\int \eps^3\d_x^3 w^\eps \eps^3\d_x^3\(u^\eps \d_x
    w^\eps\)=\eps \int \eps^3\d_x^3 w^\eps u^\eps \eps^3\d_x^4
    w^\eps+\int \eps^3\d_x^3 w^\eps\left[ \eps^3\d_x^3,
    u^\eps \right] \eps\d_x  w^\eps, 
  \end{equation*}
  the first term on the right hand side is integrated by parts,
  \begin{equation*}
\left|  \eps \int \eps^3\d_x^3 w^\eps u^\eps \eps^3\d_x^4
    w^\eps\right|=   \frac{1}{2}\left|\int \(\eps^3\d_x^3 w^\eps\)^2
    \eps\d_x 
    u^\eps\right| \lesssim \(1+\|\eps \d_x
  w^\eps\|_{L^\infty}\)\|w^\eps\|_{H^3_\eps}. 
\end{equation*}
The commutator is estimated by
\begin{align*}
  \left\| \left[ \eps^3\d_x^3,
    u^\eps \right] \eps\d_x  w^\eps\right\|_{L^2}
  &\lesssim \|\eps\d_x w^\eps\eps^3\d_x^3
    u^\eps\|_{L^2} + \|\eps^2\d_x^2 w^\eps\eps^2\d_x^2 u^\eps\|_{L^2}\\
&\quad    + \|\eps \d_x u^\eps\eps^3\d_x^3 w^\eps\|_{L^2}. 
\end{align*}
The last term on the right hand side is controlled by
\begin{equation*}
   \|\eps \d_x u^\eps\|_{L^\infty}\|\eps^3\d_x^3
    w^\eps\|_{L^2} \lesssim \(1+\|\eps\d_x w^\eps\|_{L^\infty}\)
    \|w^\eps\|_{H^3_\eps}. 
\end{equation*}
We may write like in the KdV case
\begin{align*}
   \|\eps\d_x w^\eps\eps^3\d_x^3
    u^\eps\|_{L^2}
    &\le
   \| \eps\d_x w^\eps\eps^3\d_x^3   w^\eps\|_{L^2}
      + \| \eps\d_x w^\eps\eps^3\d_x^3   u^\eps_{\rm app}\|_{L^2}\\
    &\le
   \|\eps\d_x w^\eps \|_{L^\infty}\|w^\eps\|_{H^3_\eps} + \|\eps\d_x
   w^\eps\|_{L^2} \underbrace{\|\eps^3\d_x^3   u^\eps_{\rm
      app}\|_{L^\infty}}_{\lesssim 1}\\
   &\lesssim \(1+  \|\eps\d_x w^\eps
     \|_{L^\infty}\)\|w^\eps\|_{H^3_\eps} . 
\end{align*}
Similarly,
\begin{align*}
  \|\eps^2\d_x^2 w^\eps\eps^2\d_x^2 u^\eps\|_{L^2}
  &\le \|\eps^2\d_x^2
  w^\eps\eps^2\d_x^2 w^\eps\|_{L^2} +
    \|\eps^2\d_x^2 w^\eps\eps^2\d_x^2 u^\eps_{\rm app}\|_{L^2}\\
  & \lesssim \|\eps^2\d_x^2 w^\eps\|_{L^4}^2 + \|\eps^2\d_x^2
    w^\eps\|_{L^2}. 
\end{align*}
The $L^4$ norm is estimated by resuming  the scaling transform
\begin{equation*}
  g(x,y) = f(\eps x,\eps^2 y),
\end{equation*}
and invoking the Sobolev embedding $\|g\|_{L^4}\lesssim \|g\|_{H^1}$,
leading to 
\begin{equation*}
  \|\eps^2\d_x^2f \|_{L^4}\lesssim \eps^{-3/4} \|f\|_{H^3_\eps},
\end{equation*}
hence
\begin{equation*}
  \|\eps^2\d_x^2 w^\eps\|_{L^4}^2\lesssim \eps^{-3/2} \|w^\eps\|_{H^3_\eps}^2.
\end{equation*}
\begin{remark}
 One could simplify a step in the above estimate, by writing
 \begin{equation*}
    \int \eps^3\d_x^3 w^\eps \eps^2\d_x^2 w^\eps \eps^2 \d_x^2 u^\eps
    = \int \eps^3\d_x^3 w^\eps \eps^2\d_x^2 w^\eps \eps^2 \d_x^2
    w^\eps
    +\int \eps^3\d_x^3 w^\eps \eps^2\d_x^2 w^\eps \eps^2 \d_x^2
    u^\eps_{\rm app},
  \end{equation*}
  and noticing that the first integrand on the right hand side is an
  exact derivative. However, this argument does not seem to be
  extendable to the case where the operator $\eps^3\d_x^3$ is replaced
  by $\eps^6\d_y^3$, like needed in order to conclude. 
\end{remark}
Finally, we apply $\eps^6 \d_y^3$ to \eqref{eq:w-kp}, multiply by
$\eps^6\d_y^3 w^\eps$ and integrate, to get a similar estimate, so
that summing the three integrated inequalities, we get
\begin{align*}
  \|w^\eps(t)\|_{H^3_\eps}
  &\lesssim
 \int_0^t\|\Sigma^\eps(s)|_{H^3_\eps}ds + \int_0^t  \(1+  \|\eps\d_x w^\eps
    (s) \|_{L^\infty}\)\|w^\eps(s)\|_{H^3_\eps}ds \\
  &\quad + \eps^{-3/2}
    \int_0^t  \|w^\eps(s)\|^2_{H^3_\eps}ds  .
\end{align*}
We strengthen the previous bootstrap argument: for $T>0$ fixed, we
consider the time interval $I^\eps\ni 0$ on which 
\begin{equation*}
    \|w^\eps(t)\|_{H^3_\eps}\le 2
    \|\Sigma^\eps\|_{L^\infty(0,T;H^3_\eps)},\quad t\in I^\eps.
\end{equation*}
As the right hand side is $\O(\eps^2)$, so long as this bootstrap
argument holds,
\begin{equation*}
  \|\eps\d_x w^\eps(t)\|_{L^\infty}\lesssim
  \eps^{-3/2} \|w^\eps(t)\|_{H^3_\eps}\lesssim \sqrt\eps \le 1, 
\end{equation*}
provided that $\eps$ is sufficiently small,
we conclude that, choosing $\eps(T)>0$
sufficiently small, the bootstrap argument is valid for $t\in [0,T]$
provided that $0<\eps\le \eps(T)$, and we infer the analogue of
Proposition~\ref{prop:OG}:
\begin{proposition}\label{prop:OG-kp}
  Let $\alpha_1\in C_0^\infty(M)$, $u^\eps_{\rm app}$ given by
  Proposition~\ref{prop:approx-kp}, and $u^\eps$ solve \eqref{eq:kp-semi}
  with $u^\eps_{\mid t=0} = u^\eps_{{\rm app}\mid t=0} $. 
  Then for any $T>0$,
  \begin{equation*}
    \|u^\eps-u^\eps_{\rm app}\|_{L^\infty(0,T;H^3_\eps)}=\O(\eps^2).
  \end{equation*}
 \end{proposition}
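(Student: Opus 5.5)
The plan is an energy estimate plus a bootstrap on $w^\eps=u^\eps-u^\eps_{\rm app}$, following Section~\ref{sec:preuve-OG-kdv} but with the anisotropic norm $H^3_\eps$ from \eqref{eq:Hkeps}. Since $w^\eps_{\mid t=0}=0$, the data are $x$-derivatives of smooth functions for fixed $\eps$, so we may apply $(\eps\d_x)^{-1}$ to the equation for $w^\eps$ and work with \eqref{eq:w-kp}. In that form the dispersive part $\d_x^3+\lambda\d_x^{-1}\d_y^2$ is skew-adjoint and commutes with $\d_x$ and $\d_y$, so it drops out of every energy identity. We write the nonlinearity as $\frac12\d_x(u^2-u_{\rm app}^2)=u^\eps\d_x w^\eps+w^\eps\d_x u^\eps_{\rm app}$.

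\textbf{Energy estimates.} We estimate three quantities: $\|w^\eps\|_{L^2}$, $\|\eps^3\d_x^3w^\eps\|_{L^2}$ and $\|\eps^6\d_y^3w^\eps\|_{L^2}$. The $L^2$ estimate and the $\eps^3\d_x^3$ estimate have been carried out above. For each of them, the transport term is integrated by parts, which produces $\eps\d_x u^\eps$ in $L^\infty$. We bound this by $1+\|\eps\d_xw^\eps\|_{L^\infty}$, using $\|(\eps\d_x)^ju^\eps_{\rm app}\|_{L^\infty}\lesssim1$. The $L^4$ term in the commutator is controlled by the scaled Sobolev bound $\|\eps^2\d_x^2f\|_{L^4}\lesssim\eps^{-3/4}\|f\|_{H^3_\eps}$.

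The $\eps^6\d_y^3$ estimate is the step I expect to be the main obstacle. Applying $\eps^6\d_y^3$ to $u^\eps\d_xw^\eps$ gives the leading term $\eps\int \eps^6\d_y^3w\, u\,\eps\d_x(\eps^6\d_y^3 w)$. Integrating this by parts in $x$ again gives $\eps\d_xu^\eps$ in $L^\infty$. The commutator $[\eps^6\d_y^3,u^\eps]\eps\d_xw^\eps$, however, contains mixed terms such as $(\eps^2\d_y)^ju^\eps\,(\eps^2\d_y)^{3-j}\eps\d_x w^\eps$. These involve mixed derivatives of $w^\eps$ that are not directly in the norm.

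For $u^\eps_{\rm app}$ there is no difficulty: $\eps^2\d_y$ acting on $e^{i\phi^\eps}$ produces $\O(1)$ factors, so $(\eps^2\d_y)^ju^\eps_{\rm app}$ is bounded in $L^\infty$. For the mixed derivatives of $w^\eps$, I would first try to control $\|(\eps\d_x)^a(\eps^2\d_y)^bw\|_{L^2}$ for $a+b\le3$ by $\|w\|_{H^3_\eps}$. On the Fourier side this amounts to $|\eps\xi|^a|\eps^2\eta|^b\lesssim 1+|\eps\xi|^3+|\eps^2\eta|^3$, which follows from Young's inequality. For the $\eps\d_x w$ factor that carries an extra derivative, I would put the lower-order factor in $L^\infty$ or $L^4$, using the anisotropic Sobolev bounds, which cost at most $\eps^{-3/2}$ or $\eps^{-3/4}$ per factor. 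Quadratic terms in $w^\eps$ then produce the loss $\eps^{-3/2}\|w^\eps\|^2_{H^3_\eps}$.

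\textbf{Closing the bootstrap.} Summing the three estimates gives
\[
\|w^\eps(t)\|_{H^3_\eps}\lesssim\int_0^t\|\Sigma^\eps\|_{H^3_\eps}+\int_0^t\(1+\|\eps\d_xw^\eps\|_{L^\infty}\)\|w^\eps\|_{H^3_\eps}+\eps^{-3/2}\int_0^t\|w^\eps\|_{H^3_\eps}^2 .
\]
We take as bootstrap hypothesis $\|w^\eps(t)\|_{H^3_\eps}\le C_T\eps^2$ on a maximal interval, with $C_T$ chosen larger than the constant from Gr\"onwall's lemma. On this interval, $\|\eps\d_xw^\eps\|_{L^\infty}\lesssim\eps^{-3/2}\cdot\eps^2=\sqrt\eps\le1$, and the quadratic term is bounded by $\eps^{-3/2}\cdot C_T\eps^2\|w^\eps\|_{H^3_\eps}=C_T\sqrt\eps\,\|w^\eps\|_{H^3_\eps}$, which is harmless. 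Here we need $\|\Sigma^\eps\|_{H^3_\eps}=\O(\eps^3)$, with the extra $\eps$ absorbed by the prefactor in front of $\d_t$; this comes from the construction behind Proposition~\ref{prop:approx-kp}, which must be carried out in $H^3_\eps$. Gr\"onwall's lemma then gives $\|w^\eps\|_{H^3_\eps}\le \tfrac12 C_T\eps^2$ for $\eps\le\eps(T)$. By continuity the bootstrap interval is all of $[0,T]$, and the claimed bound follows.
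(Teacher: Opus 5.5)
Your proposal is correct and follows the paper's proof. It uses the same energy estimates for \eqref{eq:w-kp} in $L^2$, $\eps^3\d_x^3$ and $\eps^6\d_y^3$, the same anisotropic $L^\infty$/$L^4$ Sobolev bounds producing the quadratic loss $\eps^{-3/2}\|w^\eps\|_{H^3_\eps}^2$, and the same bootstrap closed by $\eps^{-3/2}\cdot\eps^2=\sqrt\eps$. Your Young-inequality control of the mixed derivatives $(\eps\d_x)^a(\eps^2\d_y)^b w^\eps$, $a+b\le 3$, spells out the $\eps^6\d_y^3$ step the paper only calls ``a similar estimate'', and your hypothesis $\|w^\eps\|_{H^3_\eps}\le C_T\eps^2$ states the paper's bootstrap more cleanly.
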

\subsection{Proof of Theorem~\ref{theo:main-kp}}

We note that we may assume that the initial leading order profile,
$\alpha_1$, is of the form
\begin{equation*}
  \alpha_1(x,y) = \alpha_{1,x}(x)\alpha_{1,y}(y),
\end{equation*}
for $\alpha_{1,x},\alpha_{1,y}\in C^\infty_0(M)$. Invoking
Lemma~\ref{lem:5.1} with $\beta=2$, with $\eps$ replaced by
$\eps^2$ when the $y$ variable is addressed, we have
\begin{align*}
  \|v^\eps(0)\|_{H^{s_1,s_2}(M^2)}
  &\Eq \eps 0
  \frac{1}{\eps}\|\alpha_{1,x}(x)e^{ik_1x/\eps}\|_{H^{s_1}(M)}
    \|\alpha_{1,y}(y)e^{ik_2x/\eps^2}\|_{H^{s_2}(M)}\\
  &\approx
     \eps^{-1-s_1-2s_2},
\end{align*}
so this family goes to zero as $\eps\to 0$ provided that
$s_1+2s_2<-1$. Like  in the KdV case, we can choose initial profiles
$\alpha_{1,x}$ and $\alpha_{1,y} $ so that we have uniformly for
\begin{equation*}
  -K\le \si_1+2\si_2\le s'_1+2s'_2<-1,
\end{equation*}
the lower bound
\begin{equation*}
   \|a_0(\tau)\|_{H^{\si_1,\si_2}(M^2)} >\frac{2}{\delta},
\end{equation*}
where we emphasize the fact that $a_0$ inherits the tensor property in
$(x,y)$ from $\alpha_1$, in view of Proposition~\ref{prop:approx-kp}.
\smallbreak

Invoking the two inequalities,
\begin{itemize}
\item From Proposition~\ref{prop:OG-kp}, 
\begin{equation*}
  \|u^\eps(\tau)-u^\eps_{\rm app}(\tau)\|_{L^2}\le
  \|u^\eps(\tau)-u^\eps_{\rm app}(\tau)\|_{H^3_\eps}\lesssim \eps^2,
\end{equation*}
\item From Lemma~\ref{lem:5.1}, for $s'_1,s'_2\le 0$,
  \begin{equation*}
  \|u^\eps_{\rm app}(\tau)-\eps a_0(\tau)\|_{H^{s'_1,s'_2}}\lesssim
  \eps^{-s'_1-2s'_2},
\end{equation*}
\end{itemize}
we infer, for $\si_1,\si_2\le 0$, with $\si_1+2\si_2\le s'_1+2s'_2$, 
\begin{align*}
  \|v^\eps(\eps^2\tau)-a_0(\tau)\|_{H^{\si_1,\si_2}}
  &\le
  \frac{1}{\eps}\| u^\eps(\tau)-u^\eps_{\rm
    app}(\tau)\|_{H^{\si_1,\si_2}}+  \frac{1}{\eps}\| u^\eps_{\rm
    app}(\tau)-\eps a_0(\tau)\|_{H^{\si_1,\si_2}}\\
  & \le
  \frac{1}{\eps}\| u^\eps(\tau)-u^\eps_{\rm
    app}(\tau)\|_{L^2}+  \frac{1}{\eps}\| u^\eps_{\rm
    app}(\tau)-\eps a_0(\tau)\|_{H^{\si_1,\si_2}}\\
  &\lesssim \eps +
    \eps^{-1-\si_1-2\si_2}\lesssim \eps +
    \eps^{-1-s'_1-2s'_2}\Tend \eps 0 0, 
\end{align*}
hence Theorem~\ref{theo:main-kp}.

\appendix

\section{Action of the nonlinearity on WKB ansatz}
\label{sec:NL}

Suppose that $u^\eps_{\rm app}$ is of the form
\begin{equation}\label{eq:uapp}
  u^\eps_{\rm app} = \sum_{j=1}^3 \eps^{j-1}\(a_j+\eps
  b_j+\eps^2 c_j\) e^{ij\phi^\eps} + \text{c.c.}+ \eps a_0 ,
\end{equation}
where $a_0$ is real-valued. The phase $\phi^\eps$ is given by
\begin{align*}
  \text{KdV: }
  & \phi^\eps(t,x) = \frac{x+t}{\eps},\\
  \text{KP: }
  &  \phi^\eps(t,x,y) =
    \omega\frac{t}{\eps}+k_1\frac{x}{\eps}+k_2\frac{y}{\eps^2}.  
\end{align*}
The property which really matters in this section is the fact that for
all integers $j,k$, $(\eps\d_t)^ke^{ij\phi^\eps}$, $(\eps\d_x)^k
e^{ij\phi^\eps}$ 
and (KP case) $(\eps^2\d_y)^ke^{ij\phi^\eps}$ are bounded uniformly in
$\eps$. 
In the KdV as well as in the KP case, we
want to approximate the exact solution $u^\eps$ up to an error which
is $\O(\eps^2)$ 
in some semiclassical Sobolev space. As $\eps$-derivatives in $t$ or
$x$, as well as $\eps^2$-derivative in $y$ in the KP case, do not
change the size of the above terms in $u^\eps_{\rm app} $, we may
discard the $\O(\eps^3)$ 
contributions in this ansatz, so we assume
$c_2=b_3=c_3=0$.
\smallbreak

We want to order, in terms of harmonics and of powers of $\eps$, the
factors in
\[u^\eps_{\rm app}\eps\d_x u_{\rm
    app}^\eps=\frac{1}{2}\eps\d_x\(u_{\rm app}^\eps\)^2.\]
  We start with
$\(u_{\rm app}^\eps\)^2$. Next, we apply the operator
$\eps\d_x$. For KdV as well as for KP,
there is an extra $\eps$ factor in front of this nonlinear term (weakly
nonlinear geometric optics r\'egime). Also, the
action of $\eps\d_x$ does not amplify WKB terms
like in the expression of $u_{\rm app}^\eps$. 
Therefore,
terms which are $\O(\eps^2)$ in $\(u_{\rm app}^\eps\)^2$ may be
discarded in view of the error estimates we have in mind. In
particular, the terms $c_1$ and  $a_3$ are absent from the
computations below.  
For $u_{\rm app}^\eps$ like in \eqref{eq:uapp}, we order
\begin{align*}
  \(u_{\rm app}^\eps\)^2
  &  = \( \eps a_0 + \sum_{j=1}^3\eps^{j-1}\(a_j+\eps
  b_j+\eps^2 c_j\) e^{ij\phi^\eps} +  \sum_{j=1}^3 \eps^{j-1}\(\bar a_j+\eps
    \bar b_j+\eps^2\bar c_j\) e^{-ij\phi^\eps}\)\\
  &\times \( \eps a_0 + \sum_{k=1}^3 \eps^{k-1}\(a_k+\eps
  b_k+\eps^2 c_k\) e^{ik\phi^\eps} +  \sum_{k=1}^3 \eps^{k-1}\(\bar a_k+\eps
    \bar b_k+\eps^2\bar c_k\) e^{-ik\phi^\eps}\)\\
  &=  e^{i\phi^\eps}\( 2\eps a_0a_1 +2\eps\bar a_1 a_2 \)
   + e^{2i\phi^\eps}\(a_1^2 +2\eps a_1 b_1\)
   + e^{3i\phi^\eps}\(2\eps a_1 a_2 \)
   +\text{c.c.}\\
  &\quad  + 2|a_1|^2 + 2\eps(a_1\bar b_1+\bar a_1 b_1)
  +r_1^\eps,
\end{align*}
with $r_1^\eps=  \O(\eps^2)$ in $H^k_\eps$,
where $H^k_\eps$ is defined by \eqref{eq:H2eps} in the KdV case, by
\eqref{eq:Hkeps} in the KP case, 
and the integer $k$ is arbitrary. For $\phi^\eps$ such that $\eps\d_x
\phi^\eps = k_1\phi^\eps$, we infer
\begin{align*}
 2 \eps u_{\rm app}^\eps \d_x  u_{\rm app}^\eps
   &= ik_1 e^{i\phi^\eps}\( 2\eps a_0a_1 +2\eps\bar a_1 a_2\)
 + e^{i\phi^\eps}\eps^2\d_x\( 2 a_0a_1+2 \bar a_1 a_2 \)\\
  &\quad + 2ik_1e^{2i\phi^\eps}\(a_1^2 +2\eps a_1 b_1\)
 + e^{2i\phi^\eps}\eps\d_x\(a_1^2 +2\eps a_1 b_1\)\\ 
  &\quad + 3ik_1e^{3i\phi^\eps}\(2\eps a_1 a_2 \)
   +e^{3i\phi^\eps}\eps\d_x\(2\eps a_1 a_2 \)
  +\text{c.c.}\\
  &\quad + 2\eps\d_x\(|a_1|^2\) +
    2\eps^2\d_x(a_1\bar b_1+\bar a_1 b_1) +\eps\d_x r_1^\eps.
\end{align*}
We observe that some explicit terms above turn out to be also of order
$\O(\eps^2)$ in $H^k_\eps$, so we adapt the remainder,
\begin{align*}
  r_2^\eps
  &= 2\eps^2 \d_x(a_0a_1) e^{i\phi^\eps}+2\eps^2
    \d_x(a_1b_1)e^{2i\phi^\eps}
    +2\eps^2\d_x(a_1a_2)e^{3i\phi^\eps}+\text{c.c.}\\
  &\quad+  2\eps^2\d_x(a_1\bar b_1+\bar a_1 b_1)  +\eps\d_x r_1^\eps , 
\end{align*}
and the previous expression can be simplified,
\begin{align*}
 2 \eps u_{\rm app}^\eps \d_x  u_{\rm app}^\eps
   &= 2ik_1 \eps \(a_0a_1+\bar a_1 a_2\) e^{i\phi^\eps}
  + \(2ik_1a_1^2 +4ik_1\eps a_1 b_1+ \eps\d_x\(a_1^2 \)\)e^{2i\phi^\eps}\\
  &\quad + 6ik_1\eps a_1a_2 e^{3i\phi^\eps}
   +\text{c.c.}
  + 2\eps\d_x\(|a_1|^2\) +r_2^\eps,
\end{align*}
with $r_2^\eps=  \O(\eps^2) $ in $H^k_\eps$, and where we have
reordered the powers of $\eps$ for each harmonic.  

\section{Construction of the approximate solution: KdV case}
\label{sec:BKW-KdV}

We consider real-valued solutions to \eqref{eq:u-kdv} (or,
equivalently, of \eqref{eq:kdv}): $\alpha_{-1}=\bar\alpha_1$. We seek
for an approximate solution 
to \eqref{eq:u-kdv} of the form
\begin{align*}
  u_{\rm app}^\eps(t,x)
  &= \(a_1(t,x) +\eps b_1(t,x)+\eps^2c_1(t,x)\)
    e^{i\phi_1(t,x)/\eps} +\text{c.c.} \\ 
  &\quad +\eps \( a_2(t,x)+\eps
    b_2(t,x)\)e^{2i\phi_1(t,x)/\eps}+\eps^{2}
    a_3(t,x)e^{3i\phi_1(t,x)/\eps}\\
&\quad +\text{c.c.}+\eps a_0(t,x), 
  \end{align*}
where we recall that $\phi_1$ is given by $\phi_1(t,x)=x+t$. 
Our goal is for $u_{\rm app}^\eps$ to solve \eqref{eq:u-kdv} up to 
$\O(\eps^3)$:
\begin{align*}
  &\(\eps\d_t +\eps^3\d_x^3\)u_{\rm app}^\eps =
    \eps\( (\d_t-3\d_x)a_1 e^{i\phi_1/\eps} -6i a_2
    e^{2i\phi_1/\eps}+\text{c.c.}\)\\
  &\quad
    + \eps^2\(\( 3i\d_x^2 a_1+(\d_t-3\d_x)b_1\)e^{i\phi_1/\eps} +\(
    (\d_t-12\d_x) a_2-6ib_2\)e^{2i\phi_1/\eps}+\text{c.c.} +\d_t
    a_0 \)\\
  &\quad-24 i\eps^2 \(a_3  e^{3i\phi_1/\eps}-\bar a_3
    e^{-3i\phi_1/\eps}\) +\O\(\eps^3\),
 \end{align*}
where $\O(\eps^3)$ is measured in $H^k_\eps$ for (arbitrary) $k\in \N$
(supposing that all the functions considered are smooth). 
On the other hand, computations from Appendix~\ref{sec:NL} (with
$k_1=1$ here) yield
\begin{align*}
  \eps^{2}u_{\rm app}^\eps \d_x u_{\rm app}^\eps
  &= i\eps^2 (a_0a_1+\bar a_1a_2) e^{i\phi_1/\eps} +\(i\eps a_1^2 +2i\eps^2
    a_1b_1+\eps^2a_1\d_x a_1\)e^{2i\phi_1/\eps}\\
  &\quad +
    3i\eps^2 a_1a_2 e^{3i\phi_1/\eps}+\text{c.c.}
    +\eps^2\d_x\(|a_1|^2\)+\O(\eps^3). 
\end{align*}

We cancel the terms in $\eps$ and $\eps^2$, on each multiple of the
$e^{i\phi^\eps}$:
\begin{align*}
  e^{i\phi_1/\eps}: &\quad \O(\eps),\quad  (\d_t-3\d_x)a_1 =0,\\
  & \quad \O(\eps^2),\quad 3i\d_x^2 a_1+(\d_t-3\d_x)b_1 = 6i a_0a_1+6
    i \bar a_1  a_2 ,\\
  e^{2i\phi_1/\eps}: &\quad \O(\eps),\quad -6ia_2 = 6 i a_1^2,\\
  & \quad \O(\eps^2),\quad (\d_t-12\d_x) a_2-6ib_2=12i a_1b_1+ 6 a_1
    \d_x a_1 ,\\
 e^{3i\phi_1/\eps}: &\quad \O(\eps^2),\quad -24ia_3 = 18ia_1a_2,\\
 e^{0}: &\quad \O(\eps^2),\quad \d_t a_0=6 \d_x |a_1|^2.
\end{align*}
Imposing $a_{1\mid t=0}=\alpha_1$, we find
\begin{equation*}
  a_1(t,x) = \alpha_1(x+3t),\quad a_2(t,x) = -
  \alpha_1(x+3t)^2, 
\end{equation*}
hence in particular $a_{2\mid t=0}= -
  \alpha_1^2$. This shows that we actually consider initial data for
  $u^\eps$ which
  are not exactly like discussed so far, but well-prepared in the sense
  that this $\O(\eps)$ correction is fixed at time $t=0$. The discussion
 will be different in the case of $a_3$, because of its factor
 $\eps^2$, since we eventually want to show that $u^\eps-u^\eps_{\rm
   app}=\O(\eps^2)$ in $H^2_\eps$.

 We first compute $a_0$,
 \begin{equation*}
    a_0(t,x) = 6\int_0^t \d_x|a_1(s,x)|^2ds =
    2\int_0^t \d_s|a_1(s,x)|^2ds = 2\(
    |\alpha_1(x+3t)|^2-|\alpha_1(x)|^2\),  
  \end{equation*}
 and then infer $b_1$:
 \begin{equation}\label{eq:b1}
   \begin{aligned}
    b_1(t,x)
    &=-i \int_0^t
    \(\bar a_1 a_2+ a_0a_1 +3\d_x^2
      a_1\)(s,x+3(t-s))ds.
      \end{aligned}
  \end{equation}
 Like in the case of  $a_2$, $b_2$ and $a_3$  are given by formulas
which show that their initial data are not zero. 

\section{Construction of the approximate solution: KP case}
\label{sec:BKW-KP}

To guarantee that the source term in \eqref{eq:uapp-KP} is an
$x$-derivative, we seek an approximate solution of the form
\begin{equation}\label{eq:forme-uapp-KP}
  u^\eps_{\rm app} = \eps\d_x\(\sum_{j=1}^3 \eps^{j-1}\(a_j+\eps
  b_j+\eps^2 c_j\) e^{ij\phi^\eps} + \text{c.c.}\)+ \eps a_0 ,
\end{equation}
which requires implicitly (again) that the initial data for $u^\eps$
are well-prepared, in a sense that we make precise later. In the KP
case, the phase $\phi^\eps$ is given by \eqref{eq:eik-KP}. Up to
changing notations, this form is the same as in \eqref{eq:uapp}, since
\begin{equation*}
  \eps\d_x\( f e^{ij\phi^\eps}\) = ijk_1 f e^{ij\phi^\eps} + (\eps
  \d_x f) e^{ij\phi^\eps}.
\end{equation*}
Setting $\tilde a_0 = a_0$, and, for $j\ge 1$,
\begin{equation}\label{eq:tilde}
   \tilde a_j =ijk_1 a_j,\quad
   \tilde b_j = ijk_1b_j + \d_x a_j,\quad
   \tilde c_j = ijk_1 c_j + \d_x b_j,
\end{equation}
$u_{\rm app}^\eps$ is \eqref{eq:forme-uapp-KP} has the same form as in
\eqref{eq:uapp}, provided tildas are added, and neglecting the
$\O(\eps^3)$ terms $\eps^3 \d_x c_j$. Again, we may assume
$\tilde c_2=\tilde b_3=\tilde c_3=0$, but we may also choose to
incorporate such contributions into the remainder term. 
\smallbreak

In agreement with the above notations, we compute successively
\begin{align*}
  \eps \d_t u^\eps_{\rm app}
  &=\sum_{j=1}^3 \eps^{j-1}ij\omega
   \(\tilde a_j+\eps
  \tilde b_j+\eps^2 \tilde c_j\) e^{ij\phi^\eps} + \sum_{j=1}^3 \eps^{j-1}
   \eps\d_t\(\tilde a_j+\eps
    \tilde b_j+\eps^2 \tilde c_j\) e^{ij\phi^\eps}\\
  &\quad +\text{c.c.}+\eps^2\d_t  a_0 ,
\end{align*}
\begin{align*}
  \eps^2\d_x \d_t u^\eps_{\rm app}
  &=\sum_{j=1}^3 \eps^{j-1}(ij\omega)(ijk_1)
   \(\tilde a_j+\eps
    \tilde b_j+\eps^2 \tilde c_j\) e^{ij\phi^\eps} \\
  &\quad
  + \sum_{j=1}^3 \eps^{j-1}ij\omega
  \eps\d_x \(\tilde a_j+\eps
    \tilde b_j+\eps^2 \tilde c_j\) e^{ij\phi^\eps}  \\
  &\quad   +
    \sum_{j=1}^3 \eps^{j-1}
   ijk_1\eps\d_t\(\tilde a_j+\eps
    \tilde b_j+\eps^2 \tilde c_j\) e^{ij\phi^\eps}\\
  &\quad +
    \sum_{j=1}^3 \eps^{j-1}
   \eps^2\d_x\d_t \(\tilde a_j+\eps
    \tilde b_j+\eps^2 \tilde c_j\) e^{ij\phi^\eps}
  +\text{c.c.}+\eps^3\d_x\d_t  a_0 ,
\end{align*}
\begin{align*}
  \eps^4 \d_x^4 u^\eps_{\rm app} 
  &= \sum_{j=1}^3 \eps^{j-1}(jk_1)^4 \(\tilde a_j+\eps
    \tilde b_j+\eps^2 \tilde c_j\) e^{ij\phi^\eps} \\
  &\quad-4i \sum_{j=1}^3 \eps^{j-1}(jk_1)^3 \eps\d_x\(\tilde a_j+\eps
    \tilde b_j+\eps^2 \tilde c_j\) e^{ij\phi^\eps}\\
 &\quad-4\sum_{j=1}^3 \eps^{j-1}(jk_1)^2 (\eps\d_x)^2\(\tilde a_j+\eps
    \tilde b_j+\eps^2 \tilde c_j\) e^{ij\phi^\eps}\\
 &\quad+4i \sum_{j=1}^3 \eps^{j-1}jk_1 (\eps\d_x)^3\(\tilde a_j+\eps
    \tilde b_j+\eps^2 \tilde c_j\) e^{ij\phi^\eps}\\
 &\quad+ \sum_{j=1}^3 \eps^{j-1}(\eps\d_x)^4 \(\tilde a_j+\eps
    \tilde b_j+\eps^2 \tilde c_j\) e^{ij\phi^\eps}\\
 &\quad+ \sum_{j=1}^3 \eps^{j-1}(jk_1)^4 \(\tilde a_j+\eps
   \tilde b_j+\eps^2 \tilde c_j\) e^{ij\phi^\eps}
    +\text{c.c.}+\eps^5\d_x^4  a_0 ,
\end{align*}
\begin{align*}
  \eps^4 \d_y^2 u^\eps_{\rm app} 
  &= - \sum_{j=1}^3 \eps^{j-1}(jk_2)^2 \(\tilde a_j+\eps
    \tilde b_j+\eps^2 \tilde c_j\) e^{ij\phi^\eps} \\
  &\quad+2i \sum_{j=1}^3 \eps^{j-1}jk_2 (\eps^2\d_y)\(\tilde a_j+\eps
    \tilde b_j+\eps^2 \tilde c_j\) e^{ij\phi^\eps}\\
 &\quad +\sum_{j=1}^3 \eps^{j-1}(\eps^4\d_y^2)\(\tilde a_j+\eps
   \tilde b_j+\eps^2 \tilde c_j\) e^{ij\phi^\eps}
 +\text{c.c.}+\eps^5\d_y^2  a_0 .  
\end{align*}
Plugging these expressions into \eqref{eq:uapp-KP}, and using the
computations from Appendix~\ref{sec:NL}, we find that the only term of order
$\O(1)$ is given by:
\begin{equation*}
  \O(1): \quad \(-\omega k_1 +k_1^4-\lambda k_2^2 \)a_1 e^{i\phi^\eps}. 
\end{equation*}
Since we want to consider a nontrivial leading order amplitude,
$a_1\not\equiv 0$, we impose that the first factor is zero. For
$k_1\not =0$, we find
\begin{equation}\label{eq:disp-KP}
  \omega = k_1^3-\lambda\frac{k_2^2}{k_1}. 
\end{equation}
For other terms in the WKB hierarchy, we recall that we aim at writing
\eqref{eq:uapp-KP}, with $\Sigma^\eps=\O(\eps^3)$ in $H^3_\eps$. At
this stage, we factor out the semiclassical operator $\eps\d_x$. To do
so, we recall that we have the two relations
\begin{align*}
  u^\eps_{\rm app}
  &= \eps\d_x\(\sum_{j=1}^3 \eps^{j-1}\(a_j+\eps
    b_j+\eps^2 c_j\) e^{ij\phi^\eps} + \text{c.c.}\)+ \eps a_0 \\
  &= \sum_{j=1}^3 \eps^{j-1}\(\tilde a_j+\eps
   \tilde b_j+\eps^2 \tilde c_j+\eps^3 \d_x c_j\) e^{ij\phi^\eps} +
    \text{c.c.}+ \eps a_0  ,
\end{align*}
so we get
\begin{align*}
  \Sigma^\eps
  &=\sum_{j=1}^3 \eps^{j-1}ij\omega
   \( \tilde a_j+\eps
    \tilde b_j+\eps^2   \tilde c_j\) e^{ij\phi^\eps} + \sum_{j=1}^3 \eps^{j-1}
   \eps\d_t\(  \tilde a_j+\eps
      \tilde b_j+\eps^2  \tilde c_j\) e^{ij\phi^\eps}\\
&\quad-i  \sum_{j=1}^3 \eps^{j-1}(jk_1)^3 \(\tilde a_j+\eps
    \tilde b_j+\eps^2 \tilde c_j\) e^{ij\phi^\eps} \\
  &\quad-3 \sum_{j=1}^3 \eps^{j-1}(jk_1)^2 \eps\d_x\(\tilde a_j+\eps
    \tilde b_j+\eps^2 \tilde c_j\) e^{ij\phi^\eps}\\
 &\quad+3i\sum_{j=1}^3 \eps^{j-1}jk_1 (\eps\d_x)^2\(\tilde a_j+\eps
    \tilde b_j+\eps^2 \tilde c_j\) e^{ij\phi^\eps}\\
 &\quad+ \sum_{j=1}^3 \eps^{j-1} (\eps\d_x)^3\(\tilde a_j+\eps
    \tilde b_j+\eps^2 \tilde c_j\) e^{ij\phi^\eps}\\
 & \quad -\lambda  \sum_{j=1}^3 \eps^{j-1}(jk_2)^2 \( a_j+\eps
     b_j+\eps^2 c_j\) e^{ij\phi^\eps} \\
  &\quad+2i\lambda \sum_{j=1}^3 \eps^{j-1}jk_2 (\eps^2\d_y)\( a_j+\eps
     b_j+\eps^2  c_j\) e^{ij\phi^\eps}\\
 &\quad +\lambda \sum_{j=1}^3 \eps^{j-1}(\eps^4\d_y^2)\( a_j+\eps
    b_j+\eps^2  c_j\) e^{ij\phi^\eps}\\
  &\quad +\text{c.c.}+\eps^2\d_t  a_0+\eps^4\d_x^3  a_0+\eps^4
    \d_x^{-1}\d_y^2  a_0  
  + \frac{\eps}{2}(\eps\d_x)\(u_{\rm app}^2\).
\end{align*}
We now require algebraic relations leading to the property
$\Sigma^\eps=\O(\eps^3)$ in $H^3_\eps$, taking the computations from
Appendix~\ref{sec:NL} into account, by ordering like in the KdV case:
for the first harmonic,
\begin{align*}
  e^{i\phi^\eps}:
 &\quad \O(\eps), \quad \d_t \tilde a_1+i\omega \tilde  b_1 -ik_1^3
   \tilde b_1-3 k_1^2 \d_x \tilde a_1-\lambda k_2^2 b_1=0.\\
&\quad \O(\eps^2), \quad  \d_t \tilde b_1 + i\omega \tilde c_1 -ik_1^3
  \tilde c_1 -3k_1^2\d_x \tilde b_1 +3i\d_x^2\tilde a_1-\lambda k_2^2 c_1
  +2i\lambda k_2\d_y a_1\\
&\quad\phantom{\O(\eps^2), } \qquad +ik_1\(a_0\tilde a_1 +\overline{\tilde
  a_1}\tilde a_2\)=0.
\end{align*}
In view of \eqref{eq:tilde}, for $k_1\not =0$,
\begin{equation*}
  b_1 = \frac{1}{ik_1}\tilde b_1 +\frac{1}{k_1^2}\d_x \tilde a_1,
\end{equation*}
so the first equation becomes, in view of the dispersion relation
\eqref{eq:disp-KP},
\begin{equation*}
  \d_t \tilde a_1-3 k_1^2 \d_x \tilde
  a_1-\lambda\frac{k_2^2}{k_1^2}\d_x \tilde a_1=0. 
\end{equation*}
We therefore have the explicit formula
\begin{equation*}
  \tilde a_1(t,x,y) = \tilde
  a_1\(0,x-\(3k_1^2+\lambda\frac{k_2^2}{k_1^2}\)t,y\) =  ik_1
  \alpha_1\(x-\(3k_1^2+\lambda\frac{k_2^2}{k_1^2}\)t,y\). 
\end{equation*}
Similarly, the second equation becomes
\begin{equation*}
  \d_t \tilde b_1-3 k_1^2 \d_x \tilde
  b_1-\lambda\frac{k_2^2}{k_1^2}\d_x \tilde b_1+3i\d_x^2\tilde
  a_1+2\lambda\frac{k_2}{k_1}\d_y \tilde a_1+\lambda
\frac{k_2^2}{ik_1^3}\d_x^2\tilde a_1 +ik_1\(a_0\tilde a_1 +\overline{\tilde
  a_1}\tilde a_2\)=0. 
\end{equation*}
For the other harmonics, the dispersion relation does not imply (the
same) cancellations, and we find:
\begin{align*}
  e^{2i\phi^\eps}:
 &\quad \O(\eps), \quad 2i\omega\tilde a_2 -i(2k_1)^3 \tilde
   a_2-\lambda (2k_2)^2 a_2+ik_1 (\tilde a_1)^2 .\\
&\quad \O(\eps^2), \quad  2i\omega \tilde b_2 +\d_t \tilde a_2
  -i(2k_1)^3 \tilde b_2-3(2k_1)^2\d_x \tilde a_2-\lambda (2k_2)^2
  b_2\\
&\quad\phantom{ \O(\eps^2),}  \qquad+2ik_1\tilde a_1\tilde b_1 +\tilde a_1\d_x\tilde a_1=0,\\
  e^{3i\phi^\eps}:
&\quad \O(\eps^2), \quad  3i\omega \tilde a_3-i(3k_1)^3 \tilde
  a_3-\lambda (3k_2)^2a_3 +3ik_1\tilde a_1\tilde a_2=0,\\
  e^{0}:
&\quad \O(\eps^2), \quad  \d_t a_0 +\d_x\(|\tilde a_1|^2\)=0.
\end{align*}
At this stage, the discussion mimics the one we had in the KdV case to
construct the approximate solution.

\bibliographystyle{abbrv}
\bibliography{inflation}

\end{document}